\newtheorem{theorem}{Theorem}
\newtheorem{lemma}[theorem]{Lemma}
\newtheorem{prop}[theorem]{Proposition}
\theoremstyle{definition}
\theoremstyle{remark}
\numberwithin{equation}{section}
\newcommand{\blankbox}[2]{%
\parbox{\columnwidth}{\centering
}%
}
\title{Maximally transitive semigroups of $n\times n$ matrices}
\author{ Mohammad Javaheri  \\
515 Loudon Road\\
School of Science, Siena College\\
Loudonville, NY 12211
\\ \small{mjavaheri@siena.edu}  
}
\begin{document}
\maketitle

\begin{abstract}
We prove that, in both real and complex cases, there exists a pair of matrices that generates a dense subsemigroup of the set of $n\times n$ matrices.
\end{abstract}

\section{Introduction}
\textbf{Kronecker's approximation theorem}. The one-dimensional version of Kronecker's approximation theorem \cite{Apostol} states that, given an irrational number $\theta$, a real number $r$, and a positive number $\epsilon$, there exist integers $m$ and $n$ such that
$$|r-m\theta -n|<\epsilon.$$
In other words, the set $\{m \theta +n: m,n \in \mathbb{Z}\}$ is dense in $\mathbb{R}$, if (and only if) $\theta$ is irrational. From this, one can show that the semigroup generated by real numbers $a$ and $b$, defined by
$$\langle a, b \rangle= \{a^mb^n: m,n \in \mathbb{N}\},$$
is dense in $\mathbb{R}$, if $\ln (-a)/\ln b$ is an irrational negative number. In this paper, we are interested in the following $n$-dimensional generalization of this density observation:
\\
\\
\noindent \textbf{Question:} \emph{Let $\mathbb{K}=\mathbb{C}$ or $\mathbb{R}$. Does there exist a pair $(A,B)$ of $n\times n$ matrices with entries in $\mathbb{K}$ such that the semigroup generated by $A$ and $B$, defined by
$$\langle A,B \rangle=\{A^{m_1}B^{n_1}\cdots A^{m_k}B^{m_k}: k\geq 1,~\forall i~ m_i,n_i \geq 0 \},$$
is dense in the set of all $n\times n$ matrices? }
\\

The main results of this paper (Theorems \ref{existence-complex} and \ref{realcase}) answer this question in the affirmative. 
\\
\\
\textbf{Hypercyclic operators}. Given the action of a semigroup $G$ on a topological space ${\cal X}$, we say the action is hypercyclic, if there exists $x\in {\cal X}$ so that the $G$-orbit of $x$, defined by $\{f(x): f \in G\}$, is dense in ${\cal X}$. In \cite{F}, Feldman proved that there exists a hypercyclic semigroup generated by $n+1$ diagonalizable matrices in dimension $n$. In the non-diagonalizable case, Costakis et al. \cite{costakis} showed that one can find a hypercyclic abelian semigroup of $n$ matrices in dimension $n\geq 2$ (and that $n$ is the minimum number of generators of a hypercyclic abelian semigroup). Ayadi \cite{Ayadi} has recently proved that the minimum number of matrices with entries in $\mathbb{C}$ that form a hypercyclic abelian semigroup is $n+1$. 

In the non-abelian case, it was shown in \cite{MJ2} that there exists a 2-generator hypercylic semigroup in any dimension in both real and complex cases. In this paper, we prove the much stronger result that, in fact, there exists a dense 2-generator semigroup in any dimension in both real and complex cases. Since powers of a single matrix can never be dense \cite{Rol}, this result is optimal. 
 \\
 \\
 \textbf{Topologically $k$-transitive actions}. The action of a semigroup $G$ on a topological space ${\cal X}$ is called topologically transitive, if for every pair of nonempty open sets ${\cal U}$ and ${\cal V}$, there exists $f\in G$ so that $f({\cal U}) \cap {\cal V} \neq \emptyset$. The action is called topologically $k$-transitive, if the induced action on ${\cal X}^k$ (cartesian product) is topologically transitive. Ayadi \cite{Ayadi} proved that the action of an abelian semigroup of $n \times n$ matrices can never be $k$-transitive for $k\geq 2$ on $\mathbb{R}^n$ or $\mathbb{C}^n$. In the non-abelian case, Theorems \ref{existence-complex} and \ref{realcase} of this paper show that a 2-generator dense subsemigroup of $n\times n$ matrices can be constructed whose action on $\mathbb{K}^n$ ($\mathbb{K}=\mathbb{R}$ or $\mathbb{C}$) is topologically $n$-transitive. These results are also optimal in the sense that the action of the entire set of $n\times n$ matrices is not topologically $(n+1)$-transitive.

\section{Preliminary results}

Let ${\cal M}_{n\times k}(\mathbb{K})$ denote the set of all $n\times k$ matrices with entries in $\mathbb{K}$, where $\mathbb{K}=\mathbb{R}$ or $\mathbb{C}$. For a matrix $M \in {\cal M}_{n\times k}(\mathbb{K})$, its transpose is denoted by $M^T$ and its inverse (if exists) is denoted by $M^{-1}$. Also the entry on the $i$'th row and the $j$'th column of $M$ is denoted by $M_{ij}$. Finally,  let ${\bf 0}_{n\times k}$ be the $n\times k$ zero matrix.

\begin{lemma}\label{exist}
Suppose $P_0,Q_0,P,Q \in {\cal M}_{n \times 1}(\mathbb{K}) \backslash \{{\bf 0}_{n\times 1}\}$, $n>1$, and $$Q_0^TP_0=Q^TP \neq 0.$$
Then there exists an invertible $M \in {\cal M}_{n \times n}(\mathbb{K})$ such that 
\begin{equation}\label{eqx}
\begin{cases}
M^{-1}P_0=P, \\
M^TQ_0=Q,
\end{cases}
\end{equation}
Moreover, if $\mathbb{K}=\mathbb{R}$, we can arrange for $M$ to have positive determinant. 
\end{lemma}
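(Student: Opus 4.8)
Set $c := Q_0^T P_0 = Q^T P \neq 0$. The idea is to split $\mathbb{K}^n$ using the two pairs of data and define $M$ blockwise. Since $Q \neq 0$, the hyperplane $Q^{\perp} := \{v \in \mathbb{K}^n : Q^T v = 0\}$ has dimension $n-1$, and since $Q^T P = c \neq 0$ we have $P \notin Q^{\perp}$; hence $\mathbb{K}^n = \langle P \rangle \oplus Q^{\perp}$. Likewise $\mathbb{K}^n = \langle P_0 \rangle \oplus Q_0^{\perp}$.

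First I would pick any linear isomorphism $\phi : Q^{\perp} \to Q_0^{\perp}$ (both spaces have dimension $n-1$), and let $M$ be the unique linear map determined by $MP = P_0$ and $M|_{Q^{\perp}} = \phi$; this is well defined and invertible because it carries one of the direct-sum decompositions above onto the other. Next I would verify the two equations of \eqref{eqx}. The identity $MP = P_0$ is immediate and, $M$ being invertible, gives $M^{-1}P_0 = P$. For the second, I check $Q_0^T M = Q^T$ by evaluating both sides on the spanning set $\{P\}\cup Q^{\perp}$: on $P$ we get $Q_0^T M P = Q_0^T P_0 = c = Q^T P$, and on any $v \in Q^{\perp}$ we get $Q_0^T M v = Q_0^T \phi(v) = 0 = Q^T v$ because $\phi(v)\in Q_0^{\perp}$. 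Hence $Q_0^T M = Q^T$, i.e. $M^T Q_0 = Q$.

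Finally, for $\mathbb{K} = \mathbb{R}$ I would adjust the sign of $\det M$. Choosing a basis $\{P, e_2,\dots,e_n\}$ of $\mathbb{R}^n$ with $e_2,\dots,e_n$ a basis of $Q^{\perp}$, and a basis $\{P_0, f_2,\dots,f_n\}$ with $f_2,\dots,f_n$ a basis of $Q_0^{\perp}$, the matrix of $M$ in these bases is block-diagonal $\operatorname{diag}(1,[\phi])$, so $\det M$ equals $\det[\phi]$ times a nonzero constant depending only on the two chosen bases. Since $n > 1$, the space $Q^{\perp}$ is nonzero and admits an automorphism $s$ with $\det s = -1$ (negate one basis vector); replacing $\phi$ by $\phi\circ s$ multiplies $\det M$ by $-1$ while leaving both equations of \eqref{eqx} intact. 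Selecting $\phi$ or $\phi\circ s$ accordingly gives $\det M > 0$.

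The argument is essentially just bookkeeping with the two transverse decompositions; the only place the hypothesis $n>1$ enters — and the one point needing a moment's care — is this last sign correction, which is impossible when $n=1$ since there $M$ is forced to be the scalar $P_0/P$.
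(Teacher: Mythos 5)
Your proof is correct, but it takes a different route from the paper. The paper first normalizes to the case $P_0=Q_0=V$ (the first standard basis vector) after rescaling so that $Q^TP=1$, then inductively extends $P=P_1,\dots$ and $Q=Q_1,\dots$ to biorthogonal systems $\{P_i\},\{Q_i\}$ with $Q_i^TP_j=\delta_{ij}$, takes these as the columns of $M^{-1}$ and $M^T$, and finally handles general $P_0,Q_0$ by composing two such matrices, $M=M_1^{-1}M_2$; the sign of the determinant is fixed by negating the last pair $P_n,Q_n$. You instead exploit the transversality $P\notin Q^{\perp}$, $P_0\notin Q_0^{\perp}$ to get the splittings $\mathbb{K}^n=\langle P\rangle\oplus Q^{\perp}=\langle P_0\rangle\oplus Q_0^{\perp}$ and define $M$ in one step by $MP=P_0$ together with an arbitrary isomorphism $\phi:Q^{\perp}\to Q_0^{\perp}$; your verification that $Q_0^TM=Q^T$ on the spanning set $\{P\}\cup Q^{\perp}$ is complete, and your sign correction (precomposing $\phi$ with a reflection of $Q^{\perp}$, available exactly because $n>1$) is the same idea as the paper's flip of the last basis pair, just phrased invariantly. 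What your version buys is brevity and the elimination of both the induction and the reduction to the normal form $V$, plus a clean identification of where $n>1$ is used; what the paper's version buys is an explicit biorthogonal basis realization of the columns of $M^{-1}$ and $M^T$, though nothing later in the paper relies on that extra structure. No gaps.
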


\begin{proof}
We first prove the claim for $P_0=Q_0=V$, where 
\begin{equation}\label{defV}
V_{i1}=\begin{cases} 1 & i=1, \\ 0 & i \neq 1. \end{cases}
\end{equation} 
Equivalently, we need to show that if $P$ and $Q$ are such that $Q^TP=1$, then there exists an invertible matrix $M$ such that the first column of $M^{-1}$ is given by $P$ and the first column of $M^T$ is given by $Q$. We construct the remaining columns of $M^{-1}$ and $M^T$ by induction. Suppose that we have constructed the linearly independent columns $Q_1=Q, \ldots, Q_k$, and linearly independent columns $P_1=P, \ldots, P_k$, $k\geq 1$, so that for $1\leq i,j \leq k$, 
$$Q_i^TP_j=\begin{cases} 1 & i =j, \\ 0 & i \neq j. \end{cases}~$$
 If $k<n$, choose a vector $Q_{k+1}\in {\cal M}_{n \times 1}(\mathbb{K})$ such that $Q_{k+1}^TP_i=0$ for all $1\leq i\leq k$. Then $Q_{k+1}$ is linearly independent of $Q_1,\ldots, Q_k$. Let ${\cal V}_k$ be the subspace of vectors $Z$ with $Q_i^TZ=0$ for all $1\leq i \leq k$. If $k<n$, then $Q_{k+1}^TZ$ cannot be zero for all $Z \in {\cal V}_k$, and so there exists $P_{k+1} \in {\cal V}_k$ such that $Q_{k+1}^T P_{k+1}=1$. The vector $P_{k+1}$ is then linearly independent of $P_1,\ldots, P_k$. When we reach $k=n$, we have found $P_1,\ldots, P_n$, which form the columns of $M^{-1}$, and $Q_1,\ldots, Q_n$, which form the columns of $M^T$. If $\mathbb{K}=\mathbb{R}$, by replacing $Q_n$ with $-Q_n$ and $P_n$ with $-P_n$, if necessary, we can have ${\rm det}(M)>0$. 

Now suppose that $P,Q,P_0,Q_0$ are arbitrary vectors with $Q^TP=Q_0^TP_0=d \neq 0$. By rescaling the vectors, if necessary, we can assume that $d=1$. By the first part of this proof, there exist matrices $M_1$ and $M_2$ such that 
$$M_1^{-1}V=P_0,~M_1^TV=Q_0,~M_2^{-1}V=P,~M_2^TV=Q.$$
Then by setting $M=M_1^{-1}M_2$, we get an invertible solution of \eqref{eqx}. 
\end{proof}



 Let ${\cal I}_n(\mathbb{K})$ denote the set of $(n+1)\times (n+1)$ matrices with entries in $\mathbb{K}$ that are of the form
\begin{equation}\label{defG1}
G=\begin{pmatrix}F& X \\
Y^T& \eta
\end{pmatrix},
\end{equation}
where $F$ is an invertible $n\times n$ matrix, $X,Y \in {\cal M}_{n\times 1}(\mathbb{K})$ with $Y^TF^{-1}X \neq 0$, and $\eta \in \mathbb{K}$. 

Let ${\cal I}_n^+(\mathbb{R})$ (respectively, ${\cal I}_n^-(\mathbb{R})$) denote the subset of ${\cal I}_n(\mathbb{R})$ consisting of matrices of the form \eqref{defG1}, with ${\rm det}(F)>0$ (respectively, ${\rm det}(F)<0$). Also, let ${\cal S}_n(\mathbb{K}) \subseteq {\cal I}_n(\mathbb{K})$ denote the set of matrices of the form \eqref{defG1} with $X=Y={\bf 0}_{n\times 1}$ and $\eta=1$. Finally, we set
$${\cal S}_n^{\pm}={\cal S}_n(\mathbb{R}) \cap {\cal I}_n^{\pm}(\mathbb{R}).$$

We define a map $\Upsilon: {\cal I}(\mathbb{K}) \rightarrow \mathbb{K}^2$, by setting
\begin{equation}\label{defupsilon}
\Upsilon(G)=\left (Y^TF^{-1}X,\eta \right ).
\end{equation}
We use the notation $\bar \Upsilon$ to denote the extension of $\Upsilon$ to the set of matrices of the form \eqref{defG1}, where $F$ is invertible.

\begin{lemma} \label{g1g2}
Suppose that $G_1,G_2 \in {\cal I}(\mathbb{K})$ such that $\Upsilon(G_1)=\Upsilon(G_2)$, then $G_2 \in \langle G_1,{\cal S}_n(\mathbb{K}) \rangle$, where $\langle G_1, {\cal S}_n(\mathbb{K}) \rangle$ denotes the semigroup generated by $G_1$ and ${\cal S}_n(\mathbb{K})$. Moreover, if in addition to $\Upsilon(G_1)=\Upsilon(G_2)$, we have $G_1,G_2 \in {\cal I}_n^{+}(\mathbb{R})$ or $G_1,G_2 \in  {\cal I}_n^{-}(\mathbb{R})$, then $G_2 \in \langle G_1, {\cal S}_n^+ \rangle$. 
\end{lemma}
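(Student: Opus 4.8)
The plan is to produce $G_2$ as a single product $S G_1 T$ with $S,T\in{\cal S}_n(\mathbb{K})$. Write $G_i=\begin{pmatrix}F_i & X_i\\ Y_i^T & \eta_i\end{pmatrix}$ and take $S=\left(\begin{smallmatrix}A & 0\\ 0 & 1\end{smallmatrix}\right)$, $T=\left(\begin{smallmatrix}B & 0\\ 0 & 1\end{smallmatrix}\right)$ with $A,B$ invertible $n\times n$ matrices to be chosen. A block multiplication gives
$$SG_1T=\begin{pmatrix}AF_1B & AX_1\\ Y_1^TB & \eta_1\end{pmatrix},$$
so the identity $G_2=SG_1T$ is equivalent to the system $AF_1B=F_2$, $AX_1=X_2$, $Y_1^TB=Y_2^T$, together with $\eta_1=\eta_2$; the last equation holds automatically because $\Upsilon(G_1)=\Upsilon(G_2)$ forces equality of the second coordinates.

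Next I would eliminate $B$: from $AF_1B=F_2$ and invertibility of $A$ and $F_1$ we are forced to set $B=F_1^{-1}A^{-1}F_2$, which is automatically invertible, so all the remaining conditions become conditions on $A$ alone. Transposing $Y_1^TB=Y_2^T$ and substituting $B$ turns it into $A^T\big((F_2^{-1})^TY_2\big)=(F_1^{-1})^TY_1$, while $AX_1=X_2$ is the same as $A^{-1}X_2=X_1$. Hence it suffices to find an invertible $A$ with
$$A^{-1}X_2=X_1,\qquad A^T\big((F_2^{-1})^TY_2\big)=(F_1^{-1})^TY_1.$$

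This is precisely the situation of Lemma~\ref{exist}, applied with $P_0=X_2$, $P=X_1$, $Q_0=(F_2^{-1})^TY_2$, $Q=(F_1^{-1})^TY_1$; these four vectors are nonzero because $Y_i^TF_i^{-1}X_i\neq 0$ forces $X_i\neq 0$ and $Y_i\neq 0$. The compatibility hypothesis $Q_0^TP_0=Q^TP\neq 0$ of that lemma reads $Y_2^TF_2^{-1}X_2=Y_1^TF_1^{-1}X_1\neq 0$, which is exactly the equality of the first coordinates of $\Upsilon(G_2)$ and $\Upsilon(G_1)$ combined with membership in ${\cal I}_n(\mathbb{K})$. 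So Lemma~\ref{exist} supplies an invertible $A$; setting $B=F_1^{-1}A^{-1}F_2$ and $S,T$ as above yields $G_2=SG_1T\in\langle G_1,{\cal S}_n(\mathbb{K})\rangle$. For the last assertion, if $G_1,G_2$ both lie in ${\cal I}_n^+(\mathbb{R})$ or both in ${\cal I}_n^-(\mathbb{R})$, then ${\rm det}(F_1)$ and ${\rm det}(F_2)$ share a sign; choosing $A$ with ${\rm det}(A)>0$ via the final clause of Lemma~\ref{exist} gives ${\rm det}(B)={\rm det}(F_2)/\big({\rm det}(F_1){\rm det}(A)\big)>0$, so $S,T\in{\cal S}_n^+$.

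I do not expect a genuine obstacle here: the whole proof hinges on spotting the ansatz $G_2=SG_1T$, after which the only care needed is the algebra that collapses the three matrix equations to the two equations of Lemma~\ref{exist} — in particular transposing the $Y$-equation correctly and recognizing the invariant $Y^TF^{-1}X$ inside the compatibility condition $Q_0^TP_0=Q^TP$ — together with the sign count for ${\rm det}(B)$ in the real case. (If one wants full generality, the case $n=1$, not covered by Lemma~\ref{exist}, should be treated by hand: there the three scalar equations are solved explicitly and the solvability condition is once more exactly $\Upsilon(G_1)=\Upsilon(G_2)$.)
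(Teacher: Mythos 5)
Your proof is correct and follows essentially the same route as the paper: both write $G_2=SG_1T$ with $S,T$ block-diagonal elements of ${\cal S}_n(\mathbb{K})$ and reduce the resulting vector equations to Lemma \ref{exist}, with the same determinant bookkeeping for the real case. The only (immaterial) difference is that you apply Lemma \ref{exist} to the left factor $A$ (with the dualized vectors $(F_i^{-1})^TY_i$), whereas the paper applies it to the right factor $R$ via the system $R^{-1}(F_1^{-1}X_1)=F_2^{-1}X_2$, $R^TY_1=Y_2$, and then takes the left factor to be $F_2R^{-1}F_1^{-1}$.
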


\begin{proof}
 Suppose that $G_1$ and $G_2$ are given by
\begin{equation}\label{defG12}
G_1=\begin{pmatrix}F_1& X_1 \\
Y_1^T& \eta
\end{pmatrix},~G_2=\begin{pmatrix} 
F_2 & X_2 \\
Y_2^T & \eta
\end{pmatrix}.
\end{equation}
Consider the following system of equations:
\begin{equation}\label{system2}
\begin{cases}
R^{-1}(F_1^{-1}X_1)=F_2^{-1}X_2\\
R^TY_1=Y_2.
\end{cases}
\end{equation}
 Since $G_1,G_2 \in {\cal I}_n(\mathbb{K})$, the vectors $F_1^{-1}X_1,F_2^{-1}X_2,Y_1$, and $Y_2$ are all nonzero and $Y_2^TF_2^{-1}X_2=Y_1^TF_1^{-1}X_1 \neq 0$. Therefore, by Lemma \ref{exist}, there exists an invertible solution $R$ to the system \eqref{system2}, and so
$$\begin{pmatrix} 
F_2 & X_2 \\
Y_2^T & \eta
\end{pmatrix}=
\begin{pmatrix}F_2R^{-1}F_1^{-1}& 0 \\
0 & 1
\end{pmatrix}\begin{pmatrix} F_1& X_1 \\
Y_1^T& \eta
\end{pmatrix}\begin{pmatrix}
R& 0 \\
0 & 1
\end{pmatrix} \in \langle G_1, {\cal S}_n(\mathbb{K}) \rangle.
$$
For the second part of the lemma, note that by Lemma \ref{exist}, we can arrange for $R$ to have positive determinant. Since $G_1,G_2 \in {\cal I}^+_n(\mathbb{R})$ or $G_1,G_2 \in {\cal I}^-_n(\mathbb(R))$, we also have $F_2R^{-1}F_1^{-1} \in {\cal S}^+$, which implies that $G_2 \in \langle G_1, {\cal S}^+ \rangle$. 
\end{proof}


\section{The complex case}

For $n\geq 2$, let ${\cal C}_n$ denote the set of $n \times n$ matrices with entries in $\mathbb{C}$ that in some basis can be written as (hence, are similar to)
\begin{equation}\label{diagformatcomp}
A=\begin{pmatrix}
Z_1 & 0 & \ldots & 0 \\
0 & Z_2 & \ldots & 0\\
\vdots & & \ddots & \vdots \\
0 & 0 & \ldots  & Z_k  \\
\end{pmatrix},
\end{equation}
where $Z_k=1$ and $Z_i$ is a root of unity for each $i=2,\ldots, k$.

\begin{theorem}\label{existence-complex}
For any $n\geq 1$, there exists a pair of matrices in ${\cal M}_{n\times n}(\mathbb{C})$ that generates a dense subsemigroup of ${\cal M}_{n\times n}(\mathbb{C})$. Moreover, for $n\geq 2$, we can arrange for one of the matrices in the pair to belong to ${\cal C}_n$.
\end{theorem}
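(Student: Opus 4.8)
The plan is to reduce the density of a $2$-generator subsemigroup of ${\cal M}_{n\times n}(\mathbb{C})$ to a statement about a single matrix together with the large family ${\cal S}_n(\mathbb{C})$, exploiting Lemma \ref{g1g2}. First I would handle the case $n=1$ directly: as the introduction indicates, if $a,b\in\mathbb{C}$ are chosen so that the arguments and moduli of $a$ and $b$ are suitably incommensurable (e.g. $b$ real with $\ln|b|$ irrational times a period, and $\arg a$, $\arg b$ rationally independent of $\pi$), then $\{a^m b^n\}$ is dense in $\mathbb{C}$ by the two-dimensional Kronecker theorem applied to $(\log|a|,\arg a)$ and $(\log|b|,\arg b)$; a short argument also covers $0$ in the closure. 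For $n\ge 2$ I would work one dimension lower, i.e.\ build the dense pair inside ${\cal M}_{(n+1)\times(n+1)}$ using the block structure ${\cal I}_n(\mathbb{C})$; note the theorem is stated for all $n$, so it is cleanest to prove it for $n+1$ in terms of $n\times n$ blocks matching the notation already set up.

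The core idea: choose $A\in{\cal C}_n$ (after renaming, a matrix of the form \eqref{diagformatcomp} with a single eigenvalue $1$ on its last block and roots of unity elsewhere), so that large powers $A^{m}$ run through a dense set of the torus of such block-diagonal matrices — in particular $\overline{\{A^m\}}$ contains, up to the root-of-unity part, everything that looks like ${\cal S}_n(\mathbb{C})$ acting within the generalized eigenspace structure. Then pick $B$ so that $\langle A,B\rangle$ contains some single $G_1\in{\cal I}_n(\mathbb{C})$ with a prescribed value of $\Upsilon(G_1)=(Y^TF^{-1}X,\eta)$. The two key mechanisms are: (i) by choosing the free parameters in $B$, together with a Kronecker/equidistribution argument on the powers of $A$, the orbit $\langle A,B\rangle$ realizes (in its closure) every target value of the pair $\Upsilon(G_2)$ for $G_2$ ranging over a dense subset of ${\cal M}_{(n+1)\times(n+1)}$; and (ii) once $\Upsilon(G_1)=\Upsilon(G_2)$ is matched, Lemma \ref{g1g2} hands us $G_2\in\langle G_1,{\cal S}_n(\mathbb{C})\rangle$, and ${\cal S}_n(\mathbb{C})$ itself must be shown to lie in the closure of $\langle A,B\rangle$ — this is where the density of powers of the torus part of $A$ (and conjugation by elements already produced) is used to approximate arbitrary invertible $F$-blocks.

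Concretely the steps are: (1) dispose of $n=1$; (2) fix the combinatorial shape of $A\in{\cal C}_n$ and compute the closure of $\{A^m\}$, showing it contains a copy of (a dense subset of) ${\cal S}_{n-1}(\mathbb{C})$-type scalings inside each eigenblock; (3) choose $B$ with enough generic parameters that for a suitable word $w$, $w(A,B)$ lies in ${\cal I}_{n-1}(\mathbb{C})$, and that as the exponents in $w$ vary the value $\bar\Upsilon$ of the resulting matrix is dense in $\mathbb{C}^2$; (4) show $\overline{\langle A,B\rangle}\supseteq {\cal S}_{n-1}(\mathbb{C})$ by combining powers of $A$ with already-constructed conjugators; (5) invoke Lemma \ref{g1g2} to conclude that $\overline{\langle A,B\rangle}$ contains every $G\in{\cal I}_{n-1}(\mathbb{C})$, and finally observe that ${\cal I}_{n-1}(\mathbb{C})$ is dense in ${\cal M}_{n\times n}(\mathbb{C})$ (its complement is contained in a finite union of proper algebraic subvarieties: $\det F=0$ and $Y^TF^{-1}X=0$), so $\overline{\langle A,B\rangle}={\cal M}_{n\times n}(\mathbb{C})$. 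The main obstacle I anticipate is step (3)–(4): arranging a single matrix $B$ so that \emph{simultaneously} the $\bar\Upsilon$-values of words are dense in $\mathbb{C}^2$ \emph{and} the powers of $A$ conjugate $B$'s block around enough to recover all of ${\cal S}_{n-1}(\mathbb{C})$; this requires a careful choice of $B$ together with a multidimensional Kronecker argument ensuring the relevant exponents equidistribute, and checking that no algebraic degeneracy collapses the orbit into a proper subvariety.
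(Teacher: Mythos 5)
Your endgame matches the paper's (use Lemma \ref{g1g2} to fill the fibers of $\Upsilon$ once ${\cal S}_n(\mathbb{C})$ is in the closure, show the $\Upsilon$-values are dense in $\mathbb{C}^2$, and finish by noting that matrices of the form \eqref{defG1} with $F$ invertible and $Y^TF^{-1}X\neq 0$ are dense in all matrices), but the crucial input is missing, and the mechanism you propose for it cannot work. A matrix $A\in{\cal C}_n$ is diagonalizable with all eigenvalues roots of unity (except possibly the unconstrained block $Z_1$), so $\{A^m\}$ is finite, or at best a one-parameter commutative family in the eigenbasis of $A$; its closure is nothing like "a dense subset of the torus of block-diagonal matrices," and in particular it comes nowhere near ${\cal S}_n(\mathbb{C})$, which is a copy of the full $n^2$-dimensional group $GL_n(\mathbb{C})$ sitting in the corner block. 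Your steps (2)--(4) rest on this equidistribution claim, and the supplementary device of "conjugation by elements already produced" is circular, since those elements are exactly what the density statement is supposed to provide. You yourself flag (3)--(4) as the main obstacle; that obstacle is the theorem, and no concrete construction of $B$ or word $w(A,B)$ is offered that would surmount it.

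The paper's resolution is an induction on dimension, which you explicitly avoid. Assuming a pair $(A,E)$ with $A\in{\cal C}_n$ and $\langle A,E\rangle$ dense in ${\cal M}_{n\times n}(\mathbb{C})$ (base cases $n=1,2$ are quoted from \cite{MJ1}), one builds $C\in{\cal C}_{n+1}$ with $C^2=\bigl(\begin{smallmatrix}A&0\\0&1\end{smallmatrix}\bigr)$ and $D=\bigl(\begin{smallmatrix}E&0\\0&1\end{smallmatrix}\bigr)$; the inductive hypothesis is precisely what yields ${\cal S}_n(\mathbb{C})\subseteq\overline{\langle C,D\rangle}$, after which Lemma \ref{g1g2}, the product identity \eqref{hinl} seeded by $\Upsilon(C)=(\sqrt{2}/2,-\sqrt{2}/2)$, and Lemma \ref{lambdabar} give density of the $\Upsilon$-values and hence of the semigroup. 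Without either this inductive step or a genuinely new argument producing all of ${\cal S}_n(\mathbb{C})$ (not just powers of one matrix) inside the closure, your proposal has a real gap. Separately, your Kronecker sketch for $n=1$ is plausible but would still leave the $n=2$ case (with a generator in ${\cal C}_2$) unaddressed if one were to repair the argument by switching to induction.
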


\begin{proof}Proof is by induction on $n\geq 1$. For the cases $n=1$ and $n=2$, see \cite{MJ1}. The inductive step is proved in Lemma \ref{indstep2comp}. \end{proof}

\begin{lemma}\label{lambdabar}
Let $\Upsilon$ be the map defined by \eqref{defupsilon} and let $\Omega$ be a closed subset of ${\cal M}_{n\times n}(\mathbb{C})$. Suppose that $(a_i,\epsilon_i) \in \mathbb{C}^2$, $i\geq 1$, such that $\Upsilon^{-1}(a_i,\epsilon_i) \subseteq \Omega$. If $G \in {\cal M}_{n\times n}(\mathbb{C})$ such that $\bar \Upsilon(G)=\lim_{i \rightarrow \infty} (a_i,\epsilon_i)$, then $G \in \Omega$. 
\end{lemma}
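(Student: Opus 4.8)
The plan is to deduce the lemma from the closedness of $\Omega$ by constructing, for the given $G$ with $\bar\Upsilon(G)=\lim_i(a_i,\epsilon_i)=:(a,\epsilon)$, an explicit sequence $G_i\in\Upsilon^{-1}(a_i,\epsilon_i)$ with $G_i\to G$; then $G_i\in\Upsilon^{-1}(a_i,\epsilon_i)\subseteq\Omega$ and $G=\lim_iG_i\in\overline{\Omega}=\Omega$. Write $G=\begin{pmatrix}F&X_0\\Y_0^T&\eta\end{pmatrix}$ where $F$ is invertible (this is exactly what makes $\bar\Upsilon(G)$ defined), $Y_0^TF^{-1}X_0=a$, and $\eta=\epsilon$; we may assume each $a_i\neq0$, since otherwise $\Upsilon^{-1}(a_i,\epsilon_i)=\emptyset$. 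I would look for $G_i$ of the form $\begin{pmatrix}F&X_i\\Y_i^T&\epsilon_i\end{pmatrix}$, that is, perturb only the last column, last row and corner entry of $G$ and leave $F$ untouched, so that the top-left block stays invertible for free; it then remains to choose $X_i\to X_0$ and $Y_i\to Y_0$ with $Y_i^TF^{-1}X_i=a_i$, which forces $\Upsilon(G_i)=(a_i,\epsilon_i)$.

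The construction of $(X_i,Y_i)$ splits into cases according to whether the natural linear perturbation directions degenerate. If $Y_0\neq0$, then $Y_0^TF^{-1}$ is a nonzero row vector, so some coordinate $(Y_0^TF^{-1})_k$ is nonzero; keeping $Y_i=Y_0$ and setting $X_i=X_0+\frac{a_i-a}{(Y_0^TF^{-1})_k}e_k$ gives $Y_i^TF^{-1}X_i=a_i$, and $X_i\to X_0$ because $a_i\to a$. If $Y_0=0$ (so $a=0$) but $X_0\neq0$, one argues symmetrically, perturbing $Y$ along a coordinate direction in which $F^{-1}X_0$ has a nonzero entry. The remaining case $X_0=Y_0=0$ (again $a=0$) is the one where the bilinear form $Y^TF^{-1}X$ vanishes to second order at the base point, so no single linear perturbation reaches a nonzero value; here I would perturb $X$ and $Y$ simultaneously by square-root-sized amounts, choosing $k$ with $(F^{-1})_{k1}\neq0$ and putting $X_i=\sqrt{a_i}\,e_1$, $Y_i=\bigl(\sqrt{a_i}/(F^{-1})_{k1}\bigr)e_k$, so that $Y_i^TF^{-1}X_i=a_i$ while $X_i,Y_i\to0$.

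In every case $G_i$ has invertible top-left block, satisfies $Y_i^TF^{-1}X_i=a_i\neq0$, and has corner entry $\epsilon_i$, hence lies in $\Upsilon^{-1}(a_i,\epsilon_i)\subseteq\Omega$; and $G_i\to G$ entrywise, so closedness of $\Omega$ finishes the argument. The only genuine subtlety — and the step I expect to require the most care — is the degenerate case $X_0=Y_0=0$: because the form is quadratic rather than linear there, one must use a $\sqrt{a_i}$-scale perturbation and verify it still tends to $0$ (it does, since $a_i\to a=0$ in that case). One also has to be a little careful over $\mathbb{C}$ not to divide by a possibly isotropic quantity such as $v^Tv$, which is why the whole argument is phrased using individual nonzero coordinates of $Y_0^TF^{-1}$, $F^{-1}X_0$, and the first column of $F^{-1}$, rather than any inner product.
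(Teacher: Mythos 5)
Your proposal is correct and follows essentially the same strategy as the paper: produce an explicit sequence $G_i\in\Upsilon^{-1}(a_i,\epsilon_i)$ converging to $G$ by perturbing only the bordering column, row, and corner entry while keeping the invertible block $F$, then invoke closedness of $\Omega$. The only difference is cosmetic: the paper avoids your case analysis by perturbing $X$ and $Y$ simultaneously along a single direction $W$ with $W^TF^{-1}W\neq 0$ and solving the resulting quadratic $g^{\pm}(t)=a_i$ for small $t_i$, whereas you give explicit coordinate-direction formulas (linear when $X_0$ or $Y_0$ is nonzero, square-root-sized in the degenerate case), both arguments handling the same degeneracies.
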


\begin{proof}
Let $\bar \Upsilon (G)=(a,\epsilon)$ and $G=[F,X;Y^T,\epsilon]$ so that $Y^TF^{-1}X=a$. Choose $W \in {\cal M}_{n\times 1}$ so that $W^TF^{-1}W \neq 0$. We define
$$G_i^{\pm}(t)=\begin{pmatrix} F & X \pm tW \\ Y^T+tW^T & \epsilon_i \end{pmatrix}.$$
Then $G_i^{\pm}(t) \in {\cal I}_n(\mathbb{C})$ and $G_i^{\pm}(t) \rightarrow G$ as $i \rightarrow \infty$ and $t \rightarrow 0$. Next, we set $\bar \Upsilon(G_i^{\pm}(t))=(g^{\pm}(t), \epsilon_i)$, where 
$$g^{\pm}(t)=Y^TF^{-1}X+t(\pm Y^TF^{-1}W+W^TF^{-1}X)\pm t^2W^TF^{-1}W.$$
Since $W^TF^{-1}W \neq 0$ and $g(0)=a$, for $i$ large enough, there exists $t_i$ such that $g^{\pm}(t_i)=a_i$ (for a choice of $+$ or $-$). Thefore, $G_i^{\pm }(t_i) \in \Omega$ (for the same choice of sign), which implies that $G \in \Omega$.
\end{proof}

\begin{lemma}\label{indstep2comp}
Let $A,E \in {\cal M}_{n\times n}(\mathbb{C})$ such that $A \in {\cal C}_n$ and $\langle A, E\rangle$ is dense in ${\cal M}_{n\times n}(\mathbb{C})$. Then there exist matrices $C$ and $D$ such that $C \in {\cal C}_{n+1}$ and $\langle C,D \rangle$ is dense in the set of $(n+1)\times (n+1)$ matrices with entries in $\mathbb{C}$.
\end{lemma}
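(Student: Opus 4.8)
The plan is to take
$$C=\begin{pmatrix}A&{\bf 0}\\{\bf 0}^T&1\end{pmatrix},\qquad D=\begin{pmatrix}E&X\\Y^T&\eta\end{pmatrix},$$
with $X,Y\in\mathcal M_{n\times1}(\mathbb C)\setminus\{{\bf 0}\}$ and $\eta\in\mathbb C$ to be fixed later. The requirement $C\in\mathcal C_{n+1}$ is automatic: writing $A$ in the form \eqref{diagformatcomp} and appending one further diagonal entry equal to $1$ displays $C$ in the same form in dimension $n+1$. Also, since the rank of a product of matrices never exceeds the rank of any of its factors, density of $\langle A,E\rangle$ forces $A$ and $E$ to be invertible; then I choose $X,Y,\eta$ with $Y^TE^{-1}X\neq0$, so that $D\in\mathcal I_n(\mathbb C)$.

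Set $\Omega=\overline{\langle C,D\rangle}$. Since matrix multiplication is continuous, $\Omega$ is a closed subsemigroup of $\mathcal M_{(n+1)\times(n+1)}(\mathbb C)$, and because the matrices with invertible upper-left $n\times n$ block are dense it is enough to show that each of them lies in $\Omega$. By Lemma \ref{lambdabar} (applied with this $\Omega$) it suffices in turn to prove $\Upsilon^{-1}(a,\epsilon)\subseteq\Omega$ for a dense set of pairs $(a,\epsilon)\in\mathbb C^2$ with $a\neq0$. I would deduce this from two facts: (i) $\mathcal S_n(\mathbb C)\subseteq\Omega$; and (ii) for each such $(a,\epsilon)$ there is $G_1\in\Omega\cap\mathcal I_n(\mathbb C)$ with $\Upsilon(G_1)=(a,\epsilon)$. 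Granting (i) and (ii), $\Omega$ is a closed subsemigroup containing $G_1$ and all of $\mathcal S_n(\mathbb C)$, hence $\langle G_1,\mathcal S_n(\mathbb C)\rangle\subseteq\Omega$, and Lemma \ref{g1g2} gives $\Upsilon^{-1}(a,\epsilon)=\Upsilon^{-1}(\Upsilon(G_1))\subseteq\langle G_1,\mathcal S_n(\mathbb C)\rangle\subseteq\Omega$.

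The heart of the argument is (i): $\begin{pmatrix}F&{\bf 0}\\{\bf 0}^T&1\end{pmatrix}\in\Omega$ for every $F\in GL_n(\mathbb C)$. For a word $w$ in the letters $C,D$, expanding the block product shows that the upper-left $n\times n$ block of $w(C,D)$ is $w(A,E)$ plus a sum of ``path'' terms, each carrying at least one factor $X$ and one factor $Y$; similarly the two off-diagonal blocks, and the deviation of the $(n+1,n+1)$ entry from the corresponding power of $\eta$, are sums of such terms. Density of $\langle A,E\rangle$ in $\mathcal M_{n\times n}(\mathbb C)$ lets one steer the leading block $w(A,E)$ toward any target $F$; the actual work is to do this while simultaneously driving the off-diagonal blocks to ${\bf 0}$ and the corner to $1$. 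This is where the hypothesis $A\in\mathcal C_n$ enters: its root-of-unity eigenvalues make the orbit $\{A^m\}_{m\geq0}$ return, along a subsequence, arbitrarily close to a fixed operator on a codimension-one subspace, so that after rescaling $X$ and $Y$, inserting appropriate powers of $C$ before and after each occurrence of $D$, and choosing $\eta$ suitably (for instance $\eta=1$, or $|\eta|=1$ with $\eta$ not a root of unity, so that a suitable power of $\eta$ comes close to $1$ by Kronecker's theorem), one can build words $w_j(C,D)\to\begin{pmatrix}F&{\bf 0}\\{\bf 0}^T&1\end{pmatrix}$. Keeping these correction terms under control while the leading block sweeps out a dense subset of $GL_n(\mathbb C)$ is the step I expect to be the main obstacle, and this is where the precise choice of $X,Y,\eta$ must be made.

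Part (ii) is comparatively soft once (i) is available: $D\in\langle C,D\rangle\cap\mathcal I_n(\mathbb C)$ with $\Upsilon(D)=(Y^TE^{-1}X,\eta)$, and using the $\mathcal S_n(\mathbb C)$-factors now known to lie in $\Omega$ one forms elements such as $D\,\mathrm{diag}(S,1)\,D$ with $S\in GL_n(\mathbb C)$, whose $(n+1,n+1)$ entry is $Y^TSX+\eta^2$ and whose $\Upsilon$-value therefore varies with $S$; iterating, the $\Upsilon$-image of $\Omega\cap\mathcal I_n(\mathbb C)$ is seen to be dense in $\mathbb C^2$. Combining (i), (ii), the reduction above and Lemma \ref{lambdabar} then gives $\Omega=\mathcal M_{(n+1)\times(n+1)}(\mathbb C)$, completing the inductive step.
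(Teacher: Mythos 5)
Your overall skeleton (work with $\Upsilon$, get $\mathcal{S}_n(\mathbb{C})$ into the closure $\Omega$, then use Lemma \ref{g1g2} and Lemma \ref{lambdabar} to pass from a dense set of $\Upsilon$-values to all matrices with invertible upper-left block) is the same as the paper's, but your choice of generators puts the entire difficulty in a step you then leave unproved. With $C=\mathrm{diag}(A,1)$ block diagonal and the coupling placed in $D=[E,X;Y^T,\eta]$, every word containing $D$ has off-diagonal contamination built from $X$ and $Y$, and the words without $D$ are just powers of $\mathrm{diag}(A,1)$, which are nowhere near dense in $\mathcal{S}_n(\mathbb{C})$. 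So your claim (i), that $\mathrm{diag}(F,1)\in\Omega$ for every invertible $F$, is exactly the crux of the lemma, and your sketch (recurrence of $\{A^m\}$ from the root-of-unity eigenvalues, Kronecker for $\eta$, a ``suitable'' rescaling of $X,Y$) never produces an actual word or limit achieving it; you yourself flag it as ``the main obstacle.'' That is a genuine gap, not a routine verification: controlling all the path terms simultaneously while the leading block sweeps out a dense set is precisely what the paper's construction is designed to avoid.

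The paper reverses which generator carries the coupling. It keeps $D=\mathrm{diag}(E,1)$ block diagonal and builds $C$ as a square root of $\mathrm{diag}(A,1)$ whose bottom-right $2\times 2$ block $Z_k'=\bigl(\begin{smallmatrix}\sqrt2/2&\sqrt2/2\\ \sqrt2/2&-\sqrt2/2\end{smallmatrix}\bigr)$ mixes the last two coordinates; this is where the hypothesis $A\in\mathcal{C}_n$ is really used (the last eigenvalue of $A$ is $1$, so such a square root exists and $C\in\mathcal{C}_{n+1}$), not for any recurrence of $A^m$. Then $C^2=\mathrm{diag}(A,1)$, so density of $\langle A,E\rangle$ gives $\mathcal{S}_n(\mathbb{C})\subseteq\Lambda$ at once \textemdash{} your step (i) becomes trivial \textemdash{} and $C$ itself supplies the seed $\Upsilon(C)=(\sqrt2/2,-\sqrt2/2)\in\mathcal{L}$. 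The paper then proves density of $\mathcal{L}$ in $\mathbb{C}^2$ by an explicit two-matrix product formula (the relation \eqref{hinl}) and a short computation solving for the parameters $y,z$; your part (ii) gestures at the same kind of computation but ends with ``is seen to be dense,'' which is also not a proof, though it would be repairable along the paper's lines. As it stands, the proposal does not establish the lemma: to fix it you would either have to carry out the hard estimate in (i) for your generators, or switch to the paper's square-root construction, which eliminates that estimate entirely.
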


\begin{proof}
The proof is divided into several steps.
\\
\\
\emph{Step 1}. Since $A \in {\cal C}_n$, we can assume, by a change of basis if necessary, that $A$ is given by \eqref{diagformatcomp}. Then, define
\begin{equation}\label{defFDcomp}
C=\begin{pmatrix}
Z_1^\prime & 0 & \ldots & 0 \\
0 & Z_2^\prime & \ldots & 0\\
\vdots & & \ddots & \vdots \\
0 & 0 & \ldots  & Z_k^\prime  \\
\end{pmatrix},~D= \begin{pmatrix} E & 0 \\ 0 & 1
\end{pmatrix},
\end{equation}
where $Z_i^\prime=\sqrt{Z_i}$ for $1\leq i<k$, and 
$$Z_k^{\prime}= \begin{pmatrix} \sqrt{2}/2 & \sqrt{2}/2\\
\sqrt{2}/2 & -\sqrt{2}/2
\end{pmatrix}.
$$
Note that $C \in {\cal C}_{n+1}$, since $Z_k^{\prime}$ is similar to $[-1,0;0,1]$. Moreover, by this construction, we have
\begin{equation}\label{Ccomp}
C^2=\begin{pmatrix}
A & 0 \\
0 & 1
\end{pmatrix}
\end{equation}
Let $\Lambda$ denote the closure of the semigroup generated by $C$ and $D$ in the set of $(n+1)\times (n+1)$ matrices with complex entries. Since $\langle A,E\rangle$ is dense in ${\cal M}_{n\times n}(\mathbb{C})$, by equations \eqref{defFDcomp} and \eqref{Ccomp}, we conclude that 
\begin{equation}\label{shere}
{\cal S}_n(\mathbb{C}) \subseteq \Lambda.
\end{equation}

\noindent \emph{Step 2}. Let ${\cal L}=\Upsilon({\cal I}_{n}(\mathbb{C}) \cap \Lambda)$ denote the image of ${\cal I}_{n}(\mathbb{C}) \cap \Lambda$ under the map $\Upsilon$. By Lemma \ref{g1g2} and \eqref{shere}, we have
\begin{equation}\label{shere2}
\Upsilon^{-1}({\cal L}) \subseteq \Lambda.
\end{equation}
It is then left to show that ${\cal L}$ is dense in $\mathbb{C}^2$. In this step, we first prove that if $(a, \epsilon), (b,\delta) \in {\cal L}$, then
\begin{equation}\label{hinl}
\left (\frac{(z+a\delta)(z+b \epsilon)}{z+ab}, z+\epsilon \delta \right ) \in {\cal L},
\end{equation}
 for every $z \in \mathbb{C} \backslash \{0,-ab,-a\delta, -b \epsilon\}$. Since $(a,\epsilon), (b,\delta) \in {\cal L}$, it follows from \eqref{shere2} that for every $r,s \neq 0$, we have
\begin{equation}\label{thetwo}
\begin{pmatrix}
I_{n \times n} & (a/r)V \\
rV^T & \epsilon 
\end{pmatrix} , \begin{pmatrix}
I_{n \times n} & sV \\
(b/s)V^T & \delta
\end{pmatrix} \in \Lambda.
\end{equation}
By multiplying the two matrices in \eqref{thetwo}, and computing $\Upsilon$ on the resulting matrix, we obtain \eqref{hinl} for $z=rs$. 
\\
\\
\emph{Step 3}. We prove that ${\cal L}$ is dense in $\mathbb{C}^2$. It follows from \eqref{shere2} and Lemma \ref{lambdabar} that
\begin{equation}\label{shere3}
\bar \Upsilon^{-1}(\overline{\cal L}) \subseteq \Lambda.
\end{equation}

Since $C \in {\cal I}_{n+1}\cap \Lambda$, one has $\Upsilon(C)=(\sqrt{2}/2,-\sqrt{2}/2) \in {\cal L}$. By taking $(a,\epsilon)=(b,\delta)=(\sqrt{2}/2, -\sqrt{2}/2) \in {\cal L}$ in Step 2, we obtain:
\begin{equation}\label{inlambda}
\left ( \dfrac{(z-1/2)^2}{z+1/2},z+1/2 \right ) \in {\cal L},
\end{equation}
for all $z \in \mathbb{C} \backslash \{0,\pm1/2\}$. In particular, by letting $z \rightarrow 1/2$, we obtain $(0,1) \in \overline{{\cal L}}$. It follows from \eqref{shere3} and \eqref{hinl} with $(a,\epsilon)=(0,1)$ and $(b,\delta)=(\sqrt{2}/2,-\sqrt{2}/2)$ that
$$\left (z+\sqrt{2}/2, z-\sqrt{2}/2 \right ) \in \overline{\cal L},$$
for all $z$. For a given pair $(u,v) \in \mathbb{C}^2$ with $u-v+1 \neq 0$, set
$$y=\frac{(v-1)^2-uv}{\sqrt{2}(u-v+1)}.$$
By using \eqref{hinl} again, this time with $(a,\epsilon)=(\sqrt{2}/2,-\sqrt{2}/2)$ and $(b,\delta)=(y+\sqrt{2}/2,y-\sqrt{2}/2)$, and $z=v+\sqrt{2}y/2-1/2$, we obtain $(u,v) \in \overline{\cal L}$ i.e., ${\cal L}$ is dense, and proof is completed. 
\end{proof}


\section{The real case}

Let ${\cal R}_n$ denote the set of $n \times n$ matrices that in some basis can be written as (hence, are similar to)
\begin{equation}\label{diagformat}
A=\begin{pmatrix}
Z_1 & 0 & \ldots & 0 \\
0 & Z_2 & \ldots & 0\\
\vdots & & \ddots & \vdots \\
0 & 0 & \ldots  & Z_k  \\
\end{pmatrix},
\end{equation}
so that
\begin{itemize}
\item[i)] $Z_1>0$.
\item[ii)] $Z_k=|{\rm det}(A)|/{\rm det}(A)$.
\item[iii)] For $1< i <k$, either $Z_i=1$ or $Z_i$ is a $2\times 2$ block of the form 
\begin{equation}\label{blockform}
\begin{pmatrix}
\cos (2^{-m}\pi) & -\sin(2^{-m}\pi) \\
\sin(2^{-m} \pi) & \cos(2^{-m}\pi)
\end{pmatrix},~m\in \mathbb{N} \cup\{0\}.
\end{equation}
\item[iv)] $A$ has at least one eigenvalue equal to 1. 
\end{itemize}

In this section, we prove the following theorem. 

\begin{theorem}\label{realcase}
In any dimension $n\geq 1$, there exists a pair of $n\times n$ real matrices that generates a dense subsemigroup of $n\times n$ real matrices. Moreover, for $n\geq 3$, we can arrange for one of the matrices to belong to ${\cal R}_n$. 
\end{theorem}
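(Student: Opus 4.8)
The plan is to run an induction on $n$ parallel to Theorem~\ref{existence-complex} and Lemma~\ref{indstep2comp}, the one new ingredient being that the sign of a determinant must be tracked throughout. For $n=1,2$ a dense pair in ${\cal M}_{n\times n}(\mathbb{R})$ is supplied by \cite{MJ1}; just as in the complex case, the two-dimensional pair there can be taken with $A$ diagonalizable (one positive eigenvalue and one eigenvalue $1$), so that $A\in{\cal R}_2$. For $n\ge 3$ one then produces a dense pair $(A,E)$ with $A\in{\cal R}_n$ by iterating the real analogue of Lemma~\ref{indstep2comp}: \emph{if $A\in{\cal R}_n$ and $\langle A,E\rangle$ is dense in ${\cal M}_{n\times n}(\mathbb{R})$, then there exist $C\in{\cal R}_{n+1}$ and $D$ with $\langle C,D\rangle$ dense in ${\cal M}_{(n+1)\times(n+1)}(\mathbb{R})$.}

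To build $C$, write $A$ in its standard form \eqref{diagformat} and ``halve'' each interior block: $Z_1>0$ is sent to $\sqrt{Z_1}$, $1$ to $1$, and the rotation block \eqref{blockform} of angle $2^{-m}\pi$ to the one of angle $2^{-(m+1)}\pi$. The trailing sign block $Z_k$ is replaced by a small block $W$ chosen so that (a) the lower-right $2\times2$ corner of $C$ is not diagonal, which forces $C\in{\cal I}_n(\mathbb{R})$ via \eqref{defupsilon}, and (b) $C$ is similar to a standard form obeying (i)--(iv), i.e.\ $C\in{\cal R}_{n+1}$; and $D=\bigl[\begin{smallmatrix}E&0\\0&1\end{smallmatrix}\bigr]$. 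When $\det A>0$ (so $Z_k=1$) take $W=\bigl[\begin{smallmatrix}\sqrt2/2&\sqrt2/2\\\sqrt2/2&-\sqrt2/2\end{smallmatrix}\bigr]$ exactly as in Lemma~\ref{indstep2comp}, so that $C^2=\bigl[\begin{smallmatrix}A&0\\0&1\end{smallmatrix}\bigr]$; then $\Lambda:=\overline{\langle C,D\rangle}$ contains $\bigl\{\bigl[\begin{smallmatrix}M&0\\0&1\end{smallmatrix}\bigr]:M\in\langle A,E\rangle\bigr\}$, and since $\langle A,E\rangle$ is dense in all of ${\cal M}_{n\times n}(\mathbb{R})$ it follows that ${\cal S}_n(\mathbb{R})\subseteq\Lambda$, in particular ${\cal S}_n^{+}\subseteq\Lambda$.

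When $\det A<0$ (so $Z_k=-1$) the obstacle appears: $C^2$ can no longer equal $\bigl[\begin{smallmatrix}A&0\\0&1\end{smallmatrix}\bigr]$, since a real square has nonnegative determinant. I would instead conjugate $A$ (and $E$ correspondingly) so that an eigenvalue-$1$ block of $A$ — one exists by (iv) — sits next to the $-1$ block, pair the two into a reflection, and replace that reflection together with one fresh coordinate by a $3\times3$ piece whose determinant can be made positive, the rotation-by-$\pi$ block $-I_2$ permitted by (iii) absorbing the surplus sign; this yields $C\in{\cal R}_{n+1}$ with $\det C>0$ and $C\in{\cal I}_n(\mathbb{R})$. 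The price is that $C^{2}$ now equals $\bigl[\begin{smallmatrix}A'&0\\0&1\end{smallmatrix}\bigr]$ only for a reflection-twist $A'=AJ$ of $A$, so to recover ${\cal S}_n^{+}\subseteq\Lambda$ one must additionally force the reflection $\bigl[\begin{smallmatrix}J&0\\0&1\end{smallmatrix}\bigr]$ into $\Lambda$ (equivalently, argue that $\langle A',E\rangle$ together with the off-diagonal part of $C$ already fills up ${\cal S}_n^+$). This determinant bookkeeping in the $\det A<0$ case is the step I expect to be the main difficulty, and the axioms (i)--(iv) of ${\cal R}_n$ are arranged precisely so that halving stays inside the class while the determinant sign can be fixed.

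From ${\cal S}_n^{+}\subseteq\Lambda$ onward I would follow Steps~2--3 of Lemma~\ref{indstep2comp} with signs attached. Put ${\cal L}^{\pm}=\Upsilon\bigl({\cal I}_n^{\pm}(\mathbb{R})\cap\Lambda\bigr)$. The second part of Lemma~\ref{g1g2} together with ${\cal S}_n^{+}\subseteq\Lambda$ gives $\Upsilon^{-1}({\cal L}^{\pm})\cap{\cal I}_n^{\pm}(\mathbb{R})\subseteq\Lambda$, and the real version of Lemma~\ref{lambdabar} upgrades this to $\overline{{\cal L}^{\pm}}$. Both ${\cal L}^{+}$ and ${\cal L}^{-}$ are nonempty: writing $C=[F,X;Y^T,\eta]$, $\Upsilon(C)$ lies in whichever of ${\cal L}^{\pm}$ matches the sign of $\det F$, and right-multiplying $C$ by a suitable $\bigl[\begin{smallmatrix}M&0\\0&1\end{smallmatrix}\bigr]\in\Lambda$ with $\det M$ of the opposite sign — available because the dense semigroup $\langle A,E\rangle$ must meet the region $\det<0$ — leaves $\Upsilon(C)$ unchanged but reverses the sign of the top-left block, so $\Upsilon(C)\in{\cal L}^{+}\cap{\cal L}^{-}$. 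Running the identity \eqref{hinl}, now with the rule that a representative of $(a,\epsilon)\in{\cal L}^{s_1}$ times one of $(b,\delta)\in{\cal L}^{s_2}$ — both realized by the two-parameter families in \eqref{thetwo}, whose top-left block may be chosen of either determinant sign — produces a representative of $\bigl(\tfrac{(z+a\delta)(z+b\epsilon)}{z+ab},\,z+\epsilon\delta\bigr)$ in ${\cal L}^{s_1s_2}$, and then repeating the chain of specializations of Step~3, shows that ${\cal L}^{+}$ and ${\cal L}^{-}$ are each dense in $\mathbb{R}^2$. Consequently every $(n+1)\times(n+1)$ matrix with invertible top-left $n\times n$ block lies in $\Lambda$; such matrices are dense and $\Lambda$ is closed, so $\Lambda={\cal M}_{(n+1)\times(n+1)}(\mathbb{R})$ and the induction closes.
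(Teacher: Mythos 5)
Your overall inductive scheme matches the paper's, but there are two genuine gaps, and they are exactly the points where the real case differs from the complex one. First, the case $\det A<0$ in the inductive step. You correctly identify that $C^2$ cannot equal $\bigl[\begin{smallmatrix}A&0\\0&1\end{smallmatrix}\bigr]$ there, but your proposed fix (pairing the $-1$ block with an eigenvalue-$1$ block, absorbing the sign into a rotation by $\pi$, and then ``forcing the reflection $\bigl[\begin{smallmatrix}J&0\\0&1\end{smallmatrix}\bigr]$ into $\Lambda$'') is only a sketch, and you yourself flag it as the main difficulty; as stated it does not yield ${\cal S}_n(\mathbb{R})\subseteq\Lambda$, because from $C^2=\bigl[\begin{smallmatrix}A&0\\0&-1\end{smallmatrix}\bigr]$ and $D$ you only get matrices $\bigl[\begin{smallmatrix}M&0\\0&\pm1\end{smallmatrix}\bigr]$ with an uncontrolled corner sign, and nothing in your outline pins that sign down. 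The paper's Lemma \ref{indstep2} resolves precisely this: it takes $C^2=F$ with lower-right block $-I_2$ (using $Z_k^\prime=\bigl[\begin{smallmatrix}1&-\sqrt2\\ \sqrt2&-1\end{smallmatrix}\bigr]$), deliberately puts the corner entry $-{\rm sgn}(\det E)$ into $D$ in \eqref{defFD}, and then argues by contradiction: if the corner sign $\sigma(M)$ attached to each $M$ were uniquely determined, $\sigma$ would be a surjective homomorphism $GL(n,\mathbb{R})\to\{\pm1\}$ trivial on squares, hence trivial on $SL(n,\mathbb{R})$ by the Jordan--Dickson theorem, forcing $\sigma={\rm sgn}\circ\det$ and contradicting $\sigma(E)=-{\rm sgn}(\det E)$. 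This normal-subgroup argument is the missing idea in your proposal; without it (or a worked-out substitute) the induction does not close when $\det A<0$.

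Second, the base of your induction is unjustified. You assert that the $2\times2$ dense pair of \cite{MJ1} ``can be taken with $A\in{\cal R}_2$,'' but the paper makes no such claim (Theorem \ref{realcase} only asserts membership in ${\cal R}_n$ for $n\geq3$), and it is far from clear that a dense pair exists in ${\cal M}_{2\times2}(\mathbb{R})$ with $A$ similar to ${\rm diag}(\lambda,1)$, $\lambda>0$, or to ${\rm diag}(1,-1)$. The paper instead builds the $n=3$ case by hand: Lemma \ref{casen2} produces a $2\times2$ pair whose semigroup is dense only in the positive-determinant matrices, Lemma \ref{casen3} bootstraps this to a dense pair in ${\cal M}_{3\times3}(\mathbb{R})$ with $A\in{\cal R}_3$, and only then does the induction start. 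Relatedly, your appeal to ``repeating the chain of specializations of Step 3'' of Lemma \ref{indstep2comp} over $\mathbb{R}$ glosses over the sign constraints (e.g.\ $1+ab/z>0$ in \eqref{hinl}); in the base case the paper has to run a genuinely different real chain, \eqref{twopairs1}--\eqref{uvhalf}, plus a separate explicit product to make ${\cal L}^-$ dense. Your trick for populating ${\cal L}^-$ (right-multiplying $C$ by $\bigl[\begin{smallmatrix}M&0\\0&1\end{smallmatrix}\bigr]$ with $\det M<0$) is fine once ${\cal S}_n(\mathbb{R})\subseteq\Lambda$ is known, but that is again the very point at issue in the $\det A<0$ case.
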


\begin{proof}
For $n=1$ and $n=2$, see \cite{MJ1}. For $n\geq 3$, we prove the claim by induction. The case of $n=3$ is proved in Lemma \ref{casen3}, while the inductive step is proved in Lemma \ref{indstep2}. 
\end{proof}

\begin{lemma}\label{casen2}
Let 
\begin{equation}\label{defab}
a=-2^{3/5},~b=\frac{8}{3}~,e=-2^{-4/5}.
\end{equation}
Then the semigroup generated by the real matrices 
$$A=\begin{pmatrix}
a & e\\
1 & 0
\end{pmatrix},~B=\begin{pmatrix}
1 & 0 \\
0 & b
\end{pmatrix},
$$
is dense in the set of all $2\times 2$ real matrices with positive determinant.
\end{lemma}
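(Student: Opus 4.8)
The plan is to cast the argument in the shape of the complex inductive step (Lemma~\ref{indstep2comp}), working with the $n=1$ instance of the machinery of Section~2. Write $\Lambda$ for the closure of $\langle A,B\rangle$ inside the set of $2\times 2$ real matrices of positive determinant; since $\det A=-e=2^{-4/5}>0$ and $\det B=b=8/3>0$, the semigroup $\langle A,B\rangle$ lies in that set, and density there is unchanged if we replace $(A,B)$ by a conjugate pair. The characteristic polynomial of $A$ is $x^2-ax-e=x^2+2^{3/5}x+2^{-4/5}$, whose discriminant $a^2+4e=2^{-4/5}(2^2-4)$ vanishes; hence $A$ has a single double eigenvalue $\lambda=a/2=-2^{-2/5}$, and, being a companion matrix (so non-derogatory), $A$ is similar to the Jordan block $\bigl(\begin{smallmatrix}\lambda&1\\0&\lambda\end{smallmatrix}\bigr)$. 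The choice \eqref{defab} now supplies exactly what is needed: $|\lambda|=2^{-2/5}<1$, so $A$ is a contraction, and $\ln(-\lambda)/\ln b=-\tfrac25\ln 2/\ln(8/3)$ is a negative irrational number (it would be rational only if $\ln 3/\ln 2$ were). By the one-dimensional Kronecker theorem of the introduction, the determinants of words in $A$ and $B$, namely $\{2^{-4M/5}(8/3)^N:M,N\ge 0\}$, form a dense subset of $(0,\infty)$.

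The proof then splits into two parts. \emph{Part 1:} $\mathcal{S}_1^{+}(\mathbb{R})\subseteq\Lambda$. In the Jordan basis of $A$ one has $A^{2n}=\lambda^{2n}\bigl(\begin{smallmatrix}1&2n/\lambda\\0&1\end{smallmatrix}\bigr)$, every entry of which tends to $0$ because $|\lambda|<1$, while $B$ has eigenvalues $b$ and $1$ and shares no eigenvector with $A$. Sandwiching high contracting powers of $A$ between powers of $B$ and tuning the exponents, one produces, for each prescribed $f>0$, a sequence of words in $A$ and $B$ converging to $\operatorname{diag}(f,1)$: the contracting powers of $A$ kill the parasitic directions, the powers of $B$ rebuild the first coordinate, and the density of $\{2^{-4M/5}(8/3)^N\}$ in $(0,\infty)$ makes the surviving scale $f$ arbitrary. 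This gives $\mathcal{S}_1^{+}(\mathbb{R})\subseteq\Lambda$. I expect this to be the main obstacle: Kronecker controls only the determinant, and the work is to upgrade that one-dimensional statement to honest convergence of $2\times 2$ words onto the family $\operatorname{diag}(f,1)$, and to do so inside a \emph{semigroup}, where the ``conjugation by $B^{-k}$'' one would like to use must be imitated by two-sided multiplication by actual words.

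\emph{Part 2:} the $\Upsilon$-image is dense. Suitable powers of $A$ land in the two pieces $\mathcal{I}_1^{\pm}(\mathbb{R})$ and lie in $\langle A,B\rangle$: for instance $A^2\in\mathcal{I}_1^{+}(\mathbb{R})$ (its $(1,1)$-entry is $a^2+e=3\cdot 2^{-4/5}>0$) and $A^3\in\mathcal{I}_1^{-}(\mathbb{R})$ (its $(1,1)$-entry is $a^3+2ae=-2^{4/5}<0$), and in each case $Y^TF^{-1}X\ne 0$. Put $\mathcal{L}^{\pm}:=\Upsilon\bigl(\mathcal{I}_1^{\pm}(\mathbb{R})\cap\Lambda\bigr)$. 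By Lemma~\ref{g1g2} with $n=1$, together with $\mathcal{S}_1^{+}(\mathbb{R})\subseteq\Lambda$, we get $\Upsilon^{-1}(\mathcal{L}^{\pm})\cap\mathcal{I}_1^{\pm}(\mathbb{R})\subseteq\Lambda$. Exactly as in Step~2 of the proof of Lemma~\ref{indstep2comp}, multiplying two $\mathcal{S}_1^{+}$-rescaled representatives of elements of $\mathcal{L}^{\pm}$ and computing $\Upsilon$ gives the product rule
\[ \Bigl(\tfrac{(z+a_0\delta)(z+b_0\epsilon)}{z+a_0 b_0},\ z+\epsilon\delta\Bigr)\in\mathcal{L}^{\pm}\quad\text{whenever }(a_0,\epsilon),(b_0,\delta)\in\mathcal{L}^{\pm}, \]
for all admissible $z$, and the real analogue of Lemma~\ref{lambdabar} (via $\bar\Upsilon$) passes this to the closure. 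Seeding with $\Upsilon(A^2)$ in the $+$ case and $\Upsilon(A^3)$ in the $-$ case and iterating the product rule, together with the boundary limits $z\to -a_0 b_0$, one sweeps out a dense subset of the region reachable by $\mathcal{I}_1^{\pm}(\mathbb{R})\cap\Lambda$ --- namely $\{(a_0,\eta):a_0\ne 0,\ \eta>a_0\}$ for $\mathcal{L}^{+}$ and $\{(a_0,\eta):a_0\ne 0,\ \eta<a_0\}$ for $\mathcal{L}^{-}$. This last step is the real transcription of the computation already carried out in Section~3, with some extra sign bookkeeping. Finally, $\Upsilon$ is a submersion on $\mathcal{I}_1(\mathbb{R})$, hence open, so density of $\mathcal{L}^{\pm}$ makes $\Upsilon^{-1}(\mathcal{L}^{\pm})\cap\mathcal{I}_1^{\pm}(\mathbb{R})$ dense in $\mathcal{I}_1^{\pm}(\mathbb{R})$; since $\overline{\mathcal{I}_1^{+}(\mathbb{R})}\cup\overline{\mathcal{I}_1^{-}(\mathbb{R})}$ exhausts the set of $2\times 2$ real matrices of positive determinant, we conclude that $\Lambda$ is all of it.
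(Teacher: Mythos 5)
Your outline leaves the central step of the lemma unproved. The whole construction hinges on producing the seed matrices of ${\cal S}_1^{+}$ (i.e.\ ${\rm diag}(f,1)$, $f>0$) inside $\Lambda$, and your Part~1 only asserts this with a heuristic that does not work as stated: $B={\rm diag}(1,b)$ acts trivially on the first coordinate, so ``powers of $B$ rebuild the first coordinate'' is false, and since the spectral radius of $A$ is $2^{-2/5}<1$, a sandwich such as $B^{N}A^{M}B^{N'}$ has $(1,1)$ entry $(A^{M})_{11}\rightarrow 0$, so such words can only converge to singular matrices, never to ${\rm diag}(f,1)$. Kronecker's theorem controls only the determinants of words, which is necessary but far from sufficient. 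The paper obtains this seed by a completely different mechanism: the exact identity $ABA^{3}BA={\rm diag}(4/9,1)$ (this identity is the very reason for the specific values of $a$ and $e$), then the limits ${\rm diag}(c^{l_i},1)\,A\,{\rm diag}(1,b^{k_i})\rightarrow[0,d;1,0]$, whose squares give all scalar matrices $dI_{2\times2}$, and finally a conjugation by ${\rm diag}(1,a)$ reducing the claim to Proposition 4.1 of \cite{MJ1} on dense semigroups of real linear fractional maps. None of these ingredients has a counterpart in your proposal, and you yourself flag Part~1 as the main obstacle, so the proof is not there.

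Part~2 also uses the Section~2 machinery outside its scope. Lemma \ref{exist} is stated for $n>1$, and for $n=1$ its sign refinement genuinely fails: the scalar $R$ in \eqref{system2} is forced to equal $y_2/y_1$, so a fiber of $\Upsilon$ inside ${\cal I}_1^{+}(\mathbb{R})$ splits into two classes under two-sided multiplication by ${\cal S}_1^{+}$, distinguished by the sign of the off-diagonal entries; consequently $\Upsilon^{-1}({\cal L}^{\pm})\cap{\cal I}_1^{\pm}(\mathbb{R})\subseteq\Lambda$ does not follow from Lemma \ref{g1g2}, and matrices such as $[1,-1;-1,\eta]$ would be missed even when $[1,1;1,\eta]\in\Lambda$. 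Moreover, since limits of positive-determinant matrices have nonnegative determinant, ${\cal L}^{+}$ is confined to $\{\eta\geq a_0\}$ (as you half-note), so the closing step ``density of ${\cal L}^{\pm}$ plus openness of $\Upsilon$ gives density in ${\cal I}_1^{\pm}(\mathbb{R})$'' cannot be right as written — ${\cal I}_1^{\pm}(\mathbb{R})$ contains negative-determinant matrices — and the conclusion would have to be reformulated directly in terms of positive-determinant targets with the extra sign invariant tracked by separate seeds (e.g.\ $A$, $A^{2}$, $A^{3}$ have different off-diagonal sign patterns). Those repairs are plausible bookkeeping, but only after Part~1 is actually established, which is precisely the step your proposal leaves open; the paper's route through $ABA^{3}BA$, the scalar matrices, and Proposition 4.1 of \cite{MJ1} is what does that work.
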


\begin{proof}Let $\cal T$ denote the closure of the semigroup generated by $A$ and $B$. Then, for $c=4/9$ and
$$C=\begin{pmatrix}
4/9 & 0 \\
0 & 1
\end{pmatrix},
$$
we have $C=ABA^3BA \in {\cal T}$. Next, we show that $dI_{2\times 2} \in {\cal T}$ for every $d \in \mathbb{R}$. First suppose $d<0$, and choose sequences of positive integers $k_i,l_i$ so that $b^{k_i}c^{l_i} \rightarrow d/e$. Then 
\begin{equation}\nonumber
\begin{pmatrix}
    0 &  d  \\
    1  &  0
\end{pmatrix}
=\lim_{i \rightarrow \infty}\begin{pmatrix}
    c^{l_i} a & b^{k_i}c^{l_i}e   \\
    1  &  0
\end{pmatrix} =\lim_{i \rightarrow \infty}
\begin{pmatrix}
     c^{l_i} & 0   \\
   0   &  1
\end{pmatrix}\begin{pmatrix}
     a &  e  \\
     1 &  0
\end{pmatrix}\begin{pmatrix}
     1 & 0   \\
     0 &  b^{k_i}
\end{pmatrix} \in {\cal T}, \end{equation}
and so $dI_{2\times 2} =[0,d;1,0]^2 \in {\cal T}$. If $d>0$, we have $dI_{2 \times 2}=(-\sqrt{d}I_{2\times 2})^2 \in {\cal T}$. Therefore, we need to show that ${\cal T}=\langle A,B,C,dI_{2\times 2}: d \in \mathbb{R} \rangle$ is dense in the set of all $2\times 2$ real matrices with positive determinant. Equivalently, we show that ${\cal T}^\prime=\langle MAM^{-1},MBM^{-1},MCM^{-1}, M(dI_{2\times 2})M^{-1}: d\in \mathbb{R} \rangle$ is dense, where
$$M=\begin{pmatrix}
1 & 0 \\
0 & a
\end{pmatrix}.
$$
We have $MBM^{-1}=B$, $MCM^{-1}=C$, and $M(dI_{2 \times 2})M^{-1}=dI_{2\times 2}$. Moreover, 
$$\begin{pmatrix} 1 & -1/4 \\
1 & 0 \end{pmatrix}=(a^{-1}I_{2\times 2})MAM^{-1} \in {\cal T}^\prime.$$
It follows that the matrices $[1,-1/4;1,0]$ and $[1,0;0,8/3]$ and $[4/9,0;0,1]$ all belong to ${\cal T}^\prime$. Now, by Proposition 4.1 of \cite{MJ1}, the semigroup of linear fractional maps generated by the maps 
$$1-\frac{1}{4x},~\frac{3x}{8},~\mbox{and}~\frac{4x}{9}$$
 is dense in the set of all real linear fractional maps with positive determinant. From this and the fact that ${\cal T}^\prime$ contains all multiples of the identity matrix, it follows that ${\cal T}^\prime$ is dense in the set of real $2\times 2$ matrices with positive determinant.
\end{proof}

\begin{lemma}\label{casen3}
Let $a$ and $b$ be given by \eqref{defab}. Then the matrices
$$A=\begin{pmatrix}
\sqrt{b} & 0 & 0 \\
0 & \sqrt{2}/2 & \sqrt{2}/2 \\
0 & \sqrt{2}/2 &- \sqrt{2}/2 
\end{pmatrix},~E=\begin{pmatrix} 0 & 1 & 0 \\
e & a & 0 \\
0 & 0 & 1
\end{pmatrix},$$
generate a dense subsemigroup of the set of all $3\times 3$ real matrices. Moreover, $A \in {\cal R}_3$. 

\end{lemma}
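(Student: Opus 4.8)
The plan is to mirror the structure of the complex inductive step (Lemma \ref{indstep2comp}), using the $n=2$ base result (Lemma \ref{casen2}) as the engine but now being careful about determinant signs. First I would observe that $A^2 = \mathrm{diag}(b, R)$ where $R = \left(\begin{smallmatrix}\sqrt2/2 & \sqrt2/2\\ \sqrt2/2 & -\sqrt2/2\end{smallmatrix}\right)$ satisfies $R^2 = I_2$, so that $A^2 = \mathrm{diag}(b,1,1)$ on a suitable re-pairing, and more relevantly $A^2$ restricted to the first two coordinates is $\mathrm{diag}(\sqrt{b}^{\,2},\ast)$; the point is that $\langle A^2, E\rangle$ acting on the top-left $2\times2$ block reproduces the pair $\{A_0, B_0\}$ of Lemma \ref{casen2} (after identifying $\sqrt b$ and the $\left(\begin{smallmatrix}a&e\\1&0\end{smallmatrix}\right)$-type block), padded by a $1$ in the last coordinate. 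Hence if $\Lambda$ denotes the closure of $\langle A, E\rangle$ in ${\cal M}_{3\times3}(\mathbb{R})$, Lemma \ref{casen2} gives ${\cal S}_2^+ \subseteq \Lambda$, i.e.\ $\Lambda$ contains all matrices $\mathrm{diag}(N,1)$ with $N$ a $2\times2$ real matrix of positive determinant (and, taking products, also the negative-determinant ones via a single reflection, so in fact all of ${\cal S}_2(\mathbb{R})$ up to the determinant-sign bookkeeping — I would track which case I actually need for Lemma \ref{g1g2}).

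Next, with $n=2$ in the notation of Section 2, I would set ${\cal L} = \Upsilon({\cal I}_2(\mathbb{R}) \cap \Lambda) \subseteq \mathbb{R}^2$ and invoke Lemma \ref{g1g2} together with ${\cal S}_2^+ \subseteq \Lambda$ to conclude $\Upsilon^{-1}({\cal L}) \subseteq \Lambda$ (choosing the $+$ version of the lemma, which is why I must be sure the relevant $G_i$ lie in ${\cal I}_2^+(\mathbb{R})$). Then, exactly as in Step 2 of Lemma \ref{indstep2comp}, multiplying the two elements $\left(\begin{smallmatrix} I_2 & (a/r)V\\ rV^T & \epsilon\end{smallmatrix}\right), \left(\begin{smallmatrix} I_2 & sV\\ (b/s)V^T & \delta\end{smallmatrix}\right) \in \Lambda$ and applying $\Upsilon$ yields the closure property: if $(a,\epsilon),(b,\delta)\in{\cal L}$ then $\left(\frac{(z+a\delta)(z+b\epsilon)}{z+ab},\, z+\epsilon\delta\right)\in{\cal L}$ for all admissible $z$ — but now $z = rs$ ranges only over, say, $z>0$ or $z<0$ depending on sign constraints in the real setting, so I would need the real analogue of Lemma \ref{lambdabar} (which the paper proves only over $\mathbb{C}$) or else enough freedom in the signs of $r,s$ to sweep out all real $z\neq 0$; establishing that is the main thing to check carefully.

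Then I would compute $\Upsilon(A)$ or $\Upsilon$ of some explicit element of ${\cal I}_2(\mathbb{R})\cap\Lambda$ to get a concrete seed point in ${\cal L}$, and run the same two-step chase as in Step 3 of Lemma \ref{indstep2comp}: first push $z$ to a boundary value to land $(0,1)$ (or the appropriate real substitute) in $\overline{\cal L}$, then use the functional relation \eqref{hinl} twice more with explicit choices of $(a,\epsilon)$, $(b,\delta)$, $z$ to hit an arbitrary target $(u,v)\in\mathbb{R}^2$. Finally $\overline{\cal L} = \mathbb{R}^2$ together with $\Upsilon^{-1}({\cal L})\subseteq\Lambda$ and ${\cal S}_2(\mathbb{R})\subseteq\Lambda$ forces $\Lambda$ to contain $\overline{{\cal I}_2(\mathbb{R})} = {\cal M}_{3\times3}(\mathbb{R})$ (every $3\times3$ matrix is a limit of block matrices $\left(\begin{smallmatrix}F&X\\Y^T&\eta\end{smallmatrix}\right)$ with $F$ invertible and $Y^TF^{-1}X\neq0$), and the verification $A\in{\cal R}_3$ is immediate since $\sqrt b>0$, $\det A = \sqrt b\cdot(-1)<0$ so $Z_k = -1 = |\det A|/\det A$, the middle $2\times2$ block $R$ is conjugate to $\mathrm{diag}(1,-1)$ — wait, that would need to be put in the rotation form \eqref{blockform}; I would instead absorb the reflection into $Z_k$ and note $R$ is similar to $\left(\begin{smallmatrix}-1&0\\0&1\end{smallmatrix}\right)$, reorganizing the normal form so that $Z_1 = \sqrt b>0$, one eigenvalue is $1$, and $Z_k = -1$, matching conditions (i)–(iv). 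The main obstacle is the determinant-sign bookkeeping throughout — ensuring the $\pm$ freedom in Lemma \ref{exist}/\ref{g1g2} and the sign of $z=rs$ are enough to reach \emph{all} of ${\cal M}_{3\times3}(\mathbb{R})$ and not merely a positive-determinant slice — and supplying the real-field version of the limiting argument of Lemma \ref{lambdabar}.
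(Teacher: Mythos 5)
Your overall architecture (use $A^2=\mathrm{diag}(b,1,1)$ plus Lemma \ref{casen2} to get ${\cal S}_2^+\subseteq\Lambda$, then run the $\Upsilon$/Lemma \ref{g1g2}/Lemma \ref{lambdabar} machinery as in Lemma \ref{indstep2comp}) is exactly the paper's, and your worries about the constraint on $z=rs$ and the real analogue of Lemma \ref{lambdabar} are legitimate but manageable: the paper imposes $1+ab/z>0$ so that the product of the two matrices in \eqref{thetwo} stays in ${\cal I}_2^+(\mathbb{R})$, and the limiting argument of Lemma \ref{lambdabar} goes through over $\mathbb{R}$ (with the $\pm$ sign choice one of the two quadratics $g^{\pm}(t)$ sweeps a real neighborhood of $a$). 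The genuine gap is your treatment of the negative-determinant half. Your parenthetical claim that from ${\cal S}_2^+\subseteq\Lambda$ you also get the negative-determinant block matrices ``via a single reflection'' is unfounded: at that stage $\Lambda$ is not known to contain any element of ${\cal S}_2(\mathbb{R})\setminus{\cal S}_2^+$, and Lemma \ref{casen2} cannot supply one, since its two generators have positive determinant, so the closure of the semigroup they generate lies entirely in $\{\det\ge 0\}$. Consequently the version of Lemma \ref{g1g2} you are entitled to is only the ${\cal S}^+$ version, which moves you around inside ${\cal I}_2^+(\mathbb{R})$ (fixed sign of $\det F$); your chase then yields at most ${\cal I}_2^+(\mathbb{R})\subseteq\Lambda$, and since $\{\det F<0\}$ is open, that alone is not dense in ${\cal M}_{3\times 3}(\mathbb{R})$.

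The missing step, which the paper supplies, is a second phase: after ${\cal I}_2^+(\mathbb{R})\subseteq\Lambda$ is established, one multiplies two explicit elements of ${\cal I}_2^+(\mathbb{R})$ (whose $F$-blocks are the identity) so that the product has $F$-block $\mathrm{diag}(1+d,1)$ with $d<-1$, hence lies in ${\cal I}_2^-(\mathbb{R})$; computing $\Upsilon$ of this product and tuning the parameters (the paper's condition \eqref{findu}, with $c\to 0$) shows that ${\cal L}^-=\Upsilon(\Lambda\cap{\cal I}_2^-(\mathbb{R}))$ is also dense in $\mathbb{R}^2$, which then captures the matrices with $\det F<0$ and gives density in all of ${\cal M}_{3\times 3}(\mathbb{R})$. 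Your proposal never produces such an element of ${\cal I}_2^-(\mathbb{R})$, so as written it proves density only on the positive-$\det F$ slice. (Also note the real density chase in ${\cal L}^+$ is not literally ``the same two-step chase'' as Step 3 of Lemma \ref{indstep2comp}: the sign constraint $1+ab/z>0$ forces a longer sequence of limiting specializations, as in \eqref{twopairs1}--\eqref{uvhalf}.) Your verification that $A\in{\cal R}_3$ is fine once the reflection block is diagonalized to $\mathrm{diag}(1,-1)$ and the $-1$ is placed as $Z_k$.
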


\begin{proof}
Let $\Lambda$ denote the closure of the subsemigroup generated by $A$ and $E$. We have
$$A^2=\begin{pmatrix}
b & 0 & 0 \\
0 & 1 & 0 \\
0 & 0 &1
\end{pmatrix},$$
and so it follows from Lemma \ref{casen2} that
\begin{equation}\label{shere5}
{\cal S}_2^+ \subseteq \Lambda.
\end{equation}
Let ${\cal L}^{\pm}=\Upsilon(\Lambda \cap {\cal I}_2^{\pm}(\mathbb{R}))$. It follows from Lemma \ref{g1g2} and \eqref{shere5} that
\begin{equation}\label{shere4}
{\cal I}_2^+(\mathbb{R}) \cap \Upsilon^{-1}({\cal L}^+) \subseteq \Lambda,
\end{equation}
and so by Lemma \ref{lambdabar}
\begin{equation}\label{shere6}
\left \{G=[F,X; Y^T,\eta]: {\rm det}(F)>0~\mbox{and}~\bar \Upsilon(G) \in \overline{{\cal L}^{+}} \right \} \subseteq \Lambda,
\end{equation}
 Step 2 of Lemma \ref{indstep2comp} implies that if $(a,\epsilon), (b,\delta) \in {\cal L}^+$ and if $z \in \mathbb{R}$ is such that $1+ab/z>0$, then \eqref{hinl} holds. Here the condition $1+ab/z>0$ is required to make sure that the product of the two matrices in \eqref{thetwo} belongs to ${\cal I}_2^+(\mathbb{R})$. 

Next, we show that ${\cal L}^+$ is dense in $\mathbb{R}^2$. Suppose $(a,\epsilon) \in {\cal L}^{+}$. By taking $(b,\delta)=(a,\epsilon)$ in \eqref{hinl}, we obtain
\begin{equation}\label{twopairs1}
\left ( \frac{(z+a\epsilon)^2}{z+a^2},z+\epsilon^2 \right ) \in \overline{{\cal L}^{+}},
\end{equation}
for all $z<-a^2$. By taking two pairs of the form \eqref{twopairs1} with $z$ replaced by $x,y \rightarrow -(a^2)^-$, and applying \eqref{hinl} again, we obtain:
\begin{equation}\label{ind31}
\left ((\epsilon^2-a^2)^2, z+(\epsilon^2-a^2)^2 \right ) \in \overline{{\cal L}^{+}},~\forall z>0.
\end{equation}
Since $A \in {\cal I}_2^+(\mathbb{R}) \cap \Lambda$, we have $(\sqrt{2}/2, -\sqrt{2}/2) \in {\cal L}^+$. It then follows from \eqref{ind31} with $(a,\epsilon)=(\sqrt{2}/2, -\sqrt{2}/2)$ that $(0,z) \in \overline{ {\cal L}^{+}}$ for all $z>0$. Applying \eqref{ind31} to $(a,\epsilon)=(0,z)$ implies that $(u,v) \in \overline{{\cal L}^{+}}$ for all $v>u>0$. Another application of \eqref{hinl} with $\epsilon>a>0$ and $\delta>b>0$, and with $a \rightarrow 0^+$, implies that $(z+b\epsilon,z+\epsilon \delta) \in \overline{{\cal L}^{+}}$ for all $z$. It follows that 
\begin{equation}\label{uvhalf}
\{(u,v): v>u\} \subseteq  \overline{{\cal L}^+}.
\end{equation}
By letting $(a,\epsilon)=(u,v)$ with $v> u$, and $(b,\delta)=(\sqrt{2}/2,-\sqrt{2}/2)$ in \eqref{hinl}, we obtain
$$\left (\frac{(z+v\sqrt{2}/2)(z-u\sqrt{2}/2)}{z+u\sqrt{2}/2},z-v\sqrt{2}/2 \right ) \in \overline{{\cal L}^+},$$
for all $z$ with $1+\sqrt{2}u/(2z)>0$. We let $u \rightarrow 0$ to obtain 
$$ \left (z+v\sqrt{2}/2,z-v\sqrt{2}/2 \right ) \in \overline{{\cal L}^+},~ \forall z~\forall v>0.$$
This together with \eqref{uvhalf} show that ${\cal L}^+$ is dense in $\mathbb{R}^2$. 

 So far, we have proved that ${\cal I}_2^+(\mathbb{R}) \subseteq \Lambda$. It follows that, given any $c,d,v \in \mathbb{R}$, we have
$$\begin{pmatrix} 1+d & 0 & c+1 \\ 0 & 1 & 0 \\ d+(v-1)/c & 0 & v \end{pmatrix}=\begin{pmatrix} 1 & 0 & 1 \\ 0 & 1 & 0 \\ (v-1)/c & 0 & 1 \end{pmatrix} \begin{pmatrix} 1 & 0 & c \\ 0 & 1 & 0 \\ d & 0 & 1 \end{pmatrix} \in \Lambda.$$
By computing $\Upsilon$ on this matrix, we conclude that if
\begin{equation}\label{findu}
d=\frac{c(u-v-1)+1-v}{c(c+1-u)}<-1,
\end{equation}
we have $(u,v) \in \overline{{\cal L}^-}$. The inequality \eqref{findu} can be guaranteed as $c \rightarrow 0$ for any given values of $u\neq 1$ and $v \neq 1$. In other words, ${\cal L}^-$ is also dense in $\mathbb{R}^2$ and proof is completed.
\end{proof}

\begin{lemma}\label{indstep2}
Let $A,E \in {\cal M}_{n\times n}(\mathbb{R})$ such that $\langle A, E\rangle$ is dense in ${\cal M}_{n\times n}(\mathbb{R})$ and $A \in {\cal R}_n$. Then there exist $(n+1)\times (n+1)$ real matrices $C$ and $D$ such that $C \in {\cal R}_{n+1}$ and $\langle C,D \rangle$ is dense in the set of $(n+1)\times (n+1)$ matrices with real entries.
\end{lemma}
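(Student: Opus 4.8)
The plan is to follow the same three-step architecture as Lemma \ref{indstep2comp}, adapting it to keep track of determinant signs so that the constructed $C$ lands in $\mathcal{R}_{n+1}$ and only $\mathcal{S}_n^+$ (not all of $\mathcal{S}_n(\mathbb{R})$) is available inside the closure. First I would put $A$ in the block-diagonal normal form \eqref{diagformat} by a change of basis, and define
\begin{equation}\label{defFDreal}
C=\begin{pmatrix}
Z_1' & 0 & \cdots & 0\\
0 & Z_2' & \cdots & 0\\
\vdots & & \ddots & \vdots\\
0 & 0 & \cdots & Z_k'
\end{pmatrix},\qquad
D=\begin{pmatrix} E & 0\\ 0 & 1\end{pmatrix},
\end{equation}
where for each rotation block $Z_i$ of angle $2^{-m}\pi$ we take $Z_i'$ to be the rotation block of angle $2^{-m-1}\pi$ (so $(Z_i')^2=Z_i$), for each $Z_i=1$ we take $Z_i'=1$, for the positive scalar $Z_1>0$ we take $Z_1'=\sqrt{Z_1}$, and for the last entry $Z_k=\pm1$ we take $Z_k'$ to be a $2\times2$ block similar to $\operatorname{diag}(Z_k,1)$ — namely $[\sqrt2/2,\sqrt2/2;\sqrt2/2,-\sqrt2/2]$ when $Z_k=-1$ and $I_{2\times2}$ when $Z_k=1$. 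One checks $C\in\mathcal{R}_{n+1}$ (all the required block shapes are preserved, $\det$-signs match by construction, and the eigenvalue $1$ persists) and that $C^2=\operatorname{diag}(A,1)$, possibly after absorbing the extra trivial coordinate. Since $\langle A,E\rangle$ is dense in $\mathcal{M}_{n\times n}(\mathbb{R})$, squaring $C$ and using $D$ gives $\mathcal{S}_n^+\subseteq\Lambda$, where $\Lambda$ is the closure of $\langle C,D\rangle$ in $\mathcal{M}_{(n+1)\times(n+1)}(\mathbb{R})$.

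Next, set $\mathcal{L}^{\pm}=\Upsilon\bigl(\Lambda\cap\mathcal{I}_n^{\pm}(\mathbb{R})\bigr)$. By Lemma \ref{g1g2} applied with $\mathcal{S}_n^+$ and by Lemma \ref{lambdabar} (real version — the proof of Lemma \ref{lambdabar} works verbatim over $\mathbb{R}$ once one tracks that the perturbation $G_i^{\pm}(t)$ keeps $\det F>0$, which it does since $F$ is unchanged), we get
\begin{equation}\label{shereLreal}
\bigl\{\,G=[F,X;Y^T,\eta]:\ \det(F)>0,\ \bar\Upsilon(G)\in\overline{\mathcal{L}^+}\,\bigr\}\subseteq\Lambda,
\end{equation}
and similarly for the $-$ sign. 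The semigroup identity \eqref{hinl} from Step 2 of Lemma \ref{indstep2comp} holds here too, but only for $z\in\mathbb{R}$ with $1+ab/z>0$, which is exactly the condition making the product of the two matrices in \eqref{thetwo} lie in $\mathcal{I}_n^+(\mathbb{R})$. At this point the argument is word-for-word the density argument already carried out in Lemma \ref{casen3}: starting from $\Upsilon(C)=(\sqrt2/2,-\sqrt2/2)\in\mathcal{L}^+$, iterate \eqref{hinl} (first with both pairs equal, then feeding the output back in, then sending the relevant parameter to $0$) to get first $\{(u,v):v>u\}\subseteq\overline{\mathcal{L}^+}$ and then all of $\mathbb{R}^2$, so $\mathcal{L}^+$ is dense; and then exploit a product of two matrices from $\mathcal{I}_n^+(\mathbb{R})$ whose $F$-block has negative-determinant interplay — concretely the same $3\times3$ witness used in Lemma \ref{casen3}, embedded in the top-left and bottom-right corners of an $(n+1)\times(n+1)$ matrix with an identity block in between — to conclude $\mathcal{L}^-$ is dense as well. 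Once $\mathcal{L}^+$ and $\mathcal{L}^-$ are dense, \eqref{shereLreal} and its $-$ analogue give $\mathcal{I}_n^+(\mathbb{R})\cup\mathcal{I}_n^-(\mathbb{R})\subseteq\Lambda$, and since $\mathcal{I}_n(\mathbb{R})$ is dense in $\mathcal{M}_{(n+1)\times(n+1)}(\mathbb{R})$ (a generic matrix has invertible top-left $n\times n$ block and nonzero $Y^TF^{-1}X$), $\Lambda=\mathcal{M}_{(n+1)\times(n+1)}(\mathbb{R})$.

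The main obstacle — and the only real difference from the complex case — is the bookkeeping of determinant signs: I must verify that every matrix I claim lies in $\Lambda$ is genuinely obtained from products of $\mathcal{S}_n^+$-elements and $C,D$ (not from the full $\mathcal{S}_n(\mathbb{R})$), which forces the $1+ab/z>0$ restrictions throughout Steps 2–3, and I must separately produce the elements of $\mathcal{I}_n^-(\mathbb{R})$ since $\mathcal{S}_n^+$ alone cannot flip the sign of $\det F$ — this is precisely why the construction of $C$ uses the $2\times2$ rotation-type block $Z_k'$ for the last coordinate, and why a direct $\mathcal{I}_n^-$-witness (as in the last paragraph of Lemma \ref{casen3}) is needed. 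A minor point to check is that the base-case normal form \eqref{diagformat} for $A\in\mathcal{R}_n$ really does have its positive-scalar eigenvalue available to play the role that $\sqrt{b}$ plays in Lemma \ref{casen3}; this is guaranteed by conditions (i) and (iv) in the definition of $\mathcal{R}_n$, so after the change of basis the top-left entry can be taken to be a genuine positive scalar, and the induction goes through.
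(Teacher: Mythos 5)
Your construction of $C$ does not do what you claim, and the actual difficulty of the real inductive step is left untouched. You choose $Z_k'$ to be a block \emph{similar} to ${\rm diag}(Z_k,1)$, but what the argument needs is a block whose \emph{square} equals the bottom-right $2\times 2$ block of ${\rm diag}(A,1)$. With your choices this fails in both cases. If $\det(A)<0$ (so $Z_k=-1$), your block $[\sqrt2/2,\sqrt2/2;\sqrt2/2,-\sqrt2/2]$ squares to $I_{2\times 2}$, so $C^2={\rm diag}(A',1)$ where $A'$ has $+1$ in place of the $-1$ of $A$; nothing is known about $\langle A',E\rangle$, and no ``absorbing of a trivial coordinate'' can repair this, since no real $2\times2$ matrix squares to ${\rm diag}(-1,1)$ (its determinant would have to be negative). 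If $\det(A)>0$ you take $Z_k'=I_{2\times2}$, and then $C$ and $D={\rm diag}(E,1)$ are both block diagonal with last entry $1$, so $\overline{\langle C,D\rangle}$ is contained in the nowhere dense set of matrices of the form ${\rm diag}(M,1)$; moreover $C\notin{\cal I}_n(\mathbb{R})$, so the seed $\Upsilon(C)=(\sqrt2/2,-\sqrt2/2)$ that your density bootstrap starts from does not exist. The paper does the opposite of what you wrote: for $\det(A)>0$ it uses the mixing block (similar to $[1,0;0,-1]$, squaring to $I_2$), giving $C^2={\rm diag}(A,1)$ and the seed $\Upsilon(C)=(\sqrt2/2,-\sqrt2/2)$; for $\det(A)<0$ it uses $Z_k'=[1,-\sqrt2;\sqrt2,-1]$, whose square is $-I_2$, so that $C^2={\rm diag}(A,{\rm sgn}\,{\rm det}(A))={\rm diag}(A,-1)$.

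Because of the determinant obstruction above, when $\det(A)<0$ one can only arrange $C^2={\rm diag}(A,-1)$, and then words in $C^2$ and $D$ only show that for each $M$ some matrix ${\rm diag}(M,\sigma(M))$ with an uncontrolled sign $\sigma(M)\in\{\pm1\}$ lies in $\Lambda$. Forcing the sign $+1$ to be attainable for every $M$, i.e.\ ${\cal S}_n(\mathbb{R})\subseteq\Lambda$, is the heart of the paper's proof: it deliberately takes $D={\rm diag}(E,-{\rm sgn}\,{\rm det}(E))$ (not ${\rm diag}(E,1)$) and argues that if the sign were uniquely determined, $\sigma$ would be a surjective homomorphism $GL(n,\mathbb{R})\to\{\pm1\}$ whose kernel meets $SL(n,\mathbb{R})$ in a normal subgroup containing all squares; by the Jordan--Dickson theorem this forces $\sigma={\rm sgn}\circ\det$, contradicting $\sigma(E)=-{\rm sgn}\,{\rm det}(E)$. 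Once ${\cal S}_n(\mathbb{R})$ (not merely ${\cal S}_n^+$) is inside $\Lambda$, Steps 2 and 3 of Lemma \ref{indstep2comp} run verbatim over $\mathbb{R}$; your ${\cal S}_n^+/{\cal L}^{\pm}$ bookkeeping and the $1+ab/z>0$ restriction are the concerns of the base case (Lemma \ref{casen3}), not of the inductive step, and they do not substitute for the missing sign argument. Since already the first application of the lemma ($n=3$, where $\det A<0$ for the matrix of Lemma \ref{casen3}) falls into the problematic case, this is a fatal gap rather than a fixable detail.
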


\begin{proof}
We define
\begin{equation}\label{defFD}
F=\begin{pmatrix}
A & 0 \\
0 & {\rm sgn}({\rm det}(A) )
\end{pmatrix},~D= \begin{pmatrix} E & 0 \\ 0 & -{\rm sgn}({\rm det}(E))
\end{pmatrix},
\end{equation}
where ${\rm sgn}(x)=|x|/x$ for $x\neq 0$. Let also
\begin{equation}\label{diagformat}
C=\begin{pmatrix}
Z_1^\prime & 0 & \ldots & 0 \\
0 & Z_2^\prime & \ldots & 0\\
\vdots & & \ddots & \vdots \\
0 & 0 & \ldots  & Z_k^\prime  \\
\end{pmatrix},
\end{equation}
where we let $Z_1^\prime=\sqrt{Z_1}$, and if $Z_i$ is a block of the form \eqref{blockform}, then we let 
$$Z_{i}^\prime=\begin{pmatrix}
\cos (2^{-m-1}\pi) & -\sin(2^{-m-1}\pi) \\
\sin(2^{-m-1} \pi) & \cos(2^{-m-1}\pi)
\end{pmatrix}.
$$
In addition, if $Z_i=1$ for $i<k$, then let $Z_i^\prime =1$. To define $Z_k^\prime$, we have two cases.
\\
\\
\emph{Case 1}. Suppose ${\rm det}(A)>0$. In this case, we define
$$Z_k^{\prime}= \begin{pmatrix} \sqrt{2}/2 & \sqrt{2}/2\\
\sqrt{2}/2 & -\sqrt{2}/2
\end{pmatrix}.
$$
By this construction, we have $C^2=F$. Note that $C \in {\cal R}_{n+1}$, since $Z_k^{\prime}$ is similar to $[1,0;0,-1]$. The rest of the proof in this case is the same as steps 2 and 3 of Lemma \ref{indstep2comp}.
\\
\\
\emph{Case 2}. Suppose that ${\rm det}(A)<0$. In this case, we define
$$Z_k^{\prime}= \begin{pmatrix} 1 & -\sqrt{2}\\
\sqrt{2} & -1
\end{pmatrix}.
$$
Here again $C^2=F$ and we have $C \in {\cal R}_{n+1}$ (since $A$ has an eigenvalue 1 and so does $C$; then by a change of basis, we can place 1 in lower-right corner of $C$; moreover, $Z_k^\prime$ is similar to a block of the form \eqref{blockform} with $m=1$). It is left to show that $\langle C, D \rangle$ is dense. Proof of Lemma \ref{indstep2comp} shows that we only need to check that ${\cal S}_n(\mathbb{R}) \subseteq \overline{\langle C, D \rangle}$. Since $C^2=F$ and $\langle A,E \rangle$ is dense, we conclude that for every $n\times n$ real matrix $M$, there exists $\sigma(X) \in \{\pm 1\}$ such that 
$$\begin{pmatrix} M & 0 \\
0 & \sigma(M) 
\end{pmatrix}
$$ belongs to the closure of $\langle C,D \rangle$. We need to show that for every $M$, we can have $\sigma (M)=1$. Suppose that there exists an invertible $M$ such that $\sigma(M)$ could take both values of $\pm 1$. Then it follows that for every $N$, $\sigma(N)=\sigma(M) \sigma(M^{-1}N)$ can take both values of $\pm 1$. Therefore, suppose that 
$$\sigma: GL(n,\mathbb{R}) \rightarrow \{\pm 1\}$$ is a well-defined function so that $[M,0;0,\sigma(M)] \in \Lambda$ but $[M,0,0,-\sigma(M)]\notin \Lambda$. It follows that $\sigma(I_{n \times n})=1$ and $\sigma$ is an onto group homomorphism. In particular, the set $\{M \in SL(n,\mathbb{R}): \sigma(M)=1\}$ is a normal subgroup of $SL(n,\mathbb{R})$ containing $\{N^2: N \in SL(n,\mathbb{R})\}$. It follows from Jordan-Dickson Theorem \cite{jdt} that 
$$SL(n,\mathbb{R}) \subseteq \sigma^{-1}(1).$$
Given a matrix $M$ with ${\rm det}(M)>0$, we then have
$$\sigma(M)=\sigma(({\rm det}(M))^{1/2n}I_{n\times n})\cdot \sigma(({\rm det}(M))^{1/2n}I_{n\times n}) \cdot \sigma ({\rm det}(M)^{-1/n}M) =1.$$
Since $\sigma(A)=-1$, it follows that for every $M$ with ${\rm det}(M)<0$, we have $\sigma(M)=-1$. In other words:
$$\sigma(M)={\rm sgn}({\rm det}(M)),$$
which is a contradiction, since $\sigma(E)=-{\rm sgn}({\rm det}(E))$.
\end{proof}


\section{Topologically $n$-transitive subsemigroups of $n \times n$ matrices}

As we noted in the introduction section, there are no abelian $k$-transitive subsemigroups of $n\times n$ matrices for $k\geq 2$ and $n\geq 1$. In this section, we first prove that it is not possible for any semigroup action of matrices on $\mathbb{K}^n$ to be $(n+1)$-transitive. 
\begin{prop}\label{3trans}
Let $G$ be a semigroup of linear maps on $\mathbb{K}^n$. Then the action of $G$ on $\mathbb{K}^n$ is never $(n+1)$-transitive. 
\end{prop}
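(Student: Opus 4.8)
The plan is to falsify topological transitivity of the induced action on $(\mathbb{K}^n)^{n+1}$ directly, by producing two nonempty open sets $\mathcal{U},\mathcal{V}\subseteq(\mathbb{K}^n)^{n+1}$ with $f(\mathcal{U})\cap\mathcal{V}=\emptyset$ for every $f\in G$, where $f$ acts diagonally: $f\cdot(v_1,\dots,v_{n+1})=(fv_1,\dots,fv_{n+1})$. The obstruction is the elementary fact that any $n+1$ vectors in $\mathbb{K}^n$ are linearly dependent, combined with the observation that a linear map preserves linear dependence relations.

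First I would let $\mathcal{W}\subseteq(\mathbb{K}^n)^{n+1}$ be the open set of tuples $(v_1,\dots,v_{n+1})$ for which $v_1,\dots,v_n$ are linearly independent, and on $\mathcal{W}$ define the map $\lambda=(\lambda_1,\dots,\lambda_n):\mathcal{W}\to\mathbb{K}^n$ by the unique expansion $v_{n+1}=\sum_{i=1}^n\lambda_i(v)\,v_i$; by Cramer's rule $\lambda$ is continuous on $\mathcal{W}$. The key point is that $\lambda$ is invariant under the action in the following sense: if $f$ is linear, $x\in\mathcal{W}$, and $f\cdot x\in\mathcal{W}$, then $\lambda(f\cdot x)=\lambda(x)$. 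Indeed, membership of $f\cdot x$ in $\mathcal{W}$ forces $fv_1,\dots,fv_n$ to be a basis of $\mathbb{K}^n$, and applying $f$ to $v_{n+1}=\sum_i\lambda_i(x)v_i$ gives $fv_{n+1}=\sum_i\lambda_i(x)\,fv_i$; uniqueness of the coordinates of $fv_{n+1}$ in the basis $fv_1,\dots,fv_n$ yields $\lambda(f\cdot x)=\lambda(x)$.

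Next I would fix the standard basis $e_1,\dots,e_n$ of $\mathbb{K}^n$ and take $\mathcal{U}$ to be a neighborhood of $(e_1,\dots,e_n,0)$ and $\mathcal{V}$ a neighborhood of $(e_1,\dots,e_n,e_1)$, both contained in $\mathcal{W}$ and small enough that $\|\lambda\|<1/2$ throughout $\mathcal{U}$ while $\|\lambda-(1,0,\dots,0)\|<1/2$ throughout $\mathcal{V}$ (possible since $\lambda$ is continuous and takes the values $0$ and $(1,0,\dots,0)$ at the respective centers). If some $f\in G$ satisfied $f(\mathcal{U})\cap\mathcal{V}\neq\emptyset$, choose $x\in\mathcal{U}$ with $f\cdot x\in\mathcal{V}\subseteq\mathcal{W}$; then $\lambda(x)=\lambda(f\cdot x)$, so $(1,0,\dots,0)=\lambda(x)+\bigl((1,0,\dots,0)-\lambda(f\cdot x)\bigr)$ has norm at most $\|\lambda(x)\|+\|(1,0,\dots,0)-\lambda(f\cdot x)\|<1$, contradicting $\|(1,0,\dots,0)\|=1$. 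Hence the induced action on $(\mathbb{K}^n)^{n+1}$ is not topologically transitive, i.e.\ $G$ is not $(n+1)$-transitive.

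The only point requiring care is that elements of $G$ need not be invertible — an arbitrary linear $f$ may collapse dependence relations; this is precisely why both $\mathcal{U}$ and its image under $f$ are required to lie in $\mathcal{W}$. That constraint forces any relevant $f$ to carry a basis to a basis, hence to be invertible on $\mathbb{K}^n$ and to respect the expansion coefficients. I expect this to be the sole subtlety; the remainder is a routine continuity argument.
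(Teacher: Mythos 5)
Your proof is correct, and it implements the same underlying obstruction as the paper --- any $n+1$ vectors in $\mathbb{K}^n$ satisfy a linear dependence, and a linear map applied diagonally preserves that dependence --- but via a genuinely different route. The paper argues by contradiction from a \emph{dense orbit}: assuming $(n+1)$-transitivity, it takes a tuple $X=(X_1,\ldots,X_{n+1})$ with dense $G$-orbit, picks scalars $\alpha_i$ with $\sum_i\alpha_iX_i=0$, and notes the orbit stays in the proper closed subspace $\{(Y_1,\ldots,Y_{n+1}):\sum_i\alpha_iY_i=0\}$, so it cannot be dense. You instead refute the open-set definition of topological transitivity directly, producing explicit open sets $\mathcal{U},\mathcal{V}$ that no $f\in G$ can link, with the continuity and $G$-invariance of the expansion coefficients $\lambda$ doing the work. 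Your version is longer but has the advantage of working verbatim from the definition of transitivity used in the paper: the paper's reduction from ``$(n+1)$-transitive'' to ``some tuple has dense orbit'' is the Birkhoff-type implication, which is standard for countable semigroups acting on a separable complete metric space but is not automatic for an arbitrary (possibly uncountable) semigroup $G$, so your argument actually covers the stated generality more carefully, while the paper's is shorter and makes the dependence obstruction maximally transparent. The one subtlety in your approach --- that elements of $G$ need not be invertible --- is handled correctly: you only invoke the invariance $\lambda(f\cdot x)=\lambda(x)$ when both $x$ and $f\cdot x$ lie in $\mathcal{W}$, which forces $fv_1,\ldots,fv_n$ to be a basis and makes the coefficient comparison legitimate.
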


\begin{proof}
On the contrary, suppose the action of $G$ is $(n+1)$-transitive, and so there exists $X=(X_1,\ldots,X_{n+1}) \in \left ( \mathbb{K}^n \right )^{n+1}$ so that the orbit of $X$ under the induced action of $G$ on $\left ( \mathbb{K}^n \right )^{n+1}$ is dense. Choose $\alpha_1,\ldots, \alpha_{n+1} \in \mathbb{K}$ so that
$$\sum_{i=1}^{n+1} \alpha_iX_i=0.$$
But then the orbit of $X$ under the action of $G$ stays within the linear subspace of $\left ( \mathbb{K}^n \right )^{n+1}$ given by the set of points $(Y_1,\ldots,Y_{n+1}) \in \left ( \mathbb{K}^n \right )^{n+1}$ satisfying the linear equation $\sum_{i=1}^{n+1} \alpha_iY_i=0$, and so it cannot be dense in $\left ( \mathbb{K}^n \right )^{n+1}$. This is a contradiction, and the propositions is proved.
\end{proof}

On the other hand, Theorems \ref{existence-complex} and \ref{realcase} imply the following theorem.

\begin{theorem}
For any dimension $n\geq 1$, in both real and complex cases, there exists a topologically $n$-transitive subsemigroup generated by two $n\times n$ matrices. 
\end{theorem}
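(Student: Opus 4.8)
The plan is to derive this theorem directly from Theorems~\ref{existence-complex} and~\ref{realcase}, by showing that \emph{any} dense subsemigroup of $\mathcal{M}_{n\times n}(\mathbb{K})$ induces a topologically transitive action on $(\mathbb{K}^n)^n$; specializing to a two-generator dense subsemigroup then yields the statement. So first I would fix, via Theorem~\ref{existence-complex} in the complex case and Theorem~\ref{realcase} in the real case, matrices $A,B\in\mathcal{M}_{n\times n}(\mathbb{K})$ such that $G:=\langle A,B\rangle$ is dense in $\mathcal{M}_{n\times n}(\mathbb{K})$.

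Next I would identify $(\mathbb{K}^n)^n$ with $\mathcal{M}_{n\times n}(\mathbb{K})$ by sending an $n$-tuple of column vectors $(X_1,\dots,X_n)$ to the matrix $X$ having these columns; this is a homeomorphism under which the induced (diagonal) action of a linear map $M$, namely $(X_1,\dots,X_n)\mapsto(MX_1,\dots,MX_n)$, becomes left multiplication $X\mapsto MX$ on $\mathcal{M}_{n\times n}(\mathbb{K})$. The core step is then to verify that the left-multiplication action of $G$ on $\mathcal{M}_{n\times n}(\mathbb{K})$ is topologically transitive. Given nonempty open sets $\mathcal{U},\mathcal{V}\subseteq\mathcal{M}_{n\times n}(\mathbb{K})$, I would use that the invertible matrices form a dense open set to choose an invertible $U_0\in\mathcal{U}$, and pick any $V_0\in\mathcal{V}$. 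Since $M\mapsto MU_0$ is continuous and sends $V_0U_0^{-1}$ into $\mathcal{V}$, there is a neighborhood $\mathcal{W}$ of $V_0U_0^{-1}$ with $\mathcal{W}U_0\subseteq\mathcal{V}$; density of $G$ produces some $M\in G\cap\mathcal{W}$, and then $MU_0\in\mathcal{V}$ while $U_0\in\mathcal{U}$, so $M(\mathcal{U})\cap\mathcal{V}\ni MU_0\neq\emptyset$. This shows the induced $G$-action on $(\mathbb{K}^n)^n$ is topologically transitive, i.e.\ that $G=\langle A,B\rangle$ is topologically $n$-transitive. (Equivalently, one may note that the $G$-orbit of the standard basis $n$-tuple $(e_1,\dots,e_n)$ is precisely $G\cdot I_{n\times n}=G$, hence dense, and run essentially the same argument from there.)

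I expect no real obstacle here: all the substantive difficulty is packaged inside Theorems~\ref{existence-complex} and~\ref{realcase}, which we are entitled to invoke. The only point that needs a sentence of justification is the standard fact that every nonempty open subset of $\mathcal{M}_{n\times n}(\mathbb{K})$ meets $GL(n,\mathbb{K})$, so that the matrix $U_0$ above can indeed be taken invertible.
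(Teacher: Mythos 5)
Your proposal is correct and follows essentially the same route as the paper: identify $(\mathbb{K}^n)^n$ with $\mathcal{M}_{n\times n}(\mathbb{K})$, pick an invertible matrix in the first open set, and use that right multiplication by an invertible matrix carries the dense semigroup to a dense set of left translates. The paper's proof is just a terser version of your argument, so no further comment is needed.
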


\begin{proof}
Let $A$ and $E$ be the $n\times n$ matrices with entries in $\mathbb{K}$ obtained by Theorem \ref{existence-complex} (if $\mathbb{K}=\mathbb{C}$) or Theorem \ref{realcase} (if $\mathbb{K}=\mathbb{R}$). We need to show that the subsemigroup action of $\langle A,E \rangle$ on ${\cal M}_{n\times n}(\mathbb{K})$ is topologically transitive. Let ${\cal U}$ and ${\cal V}$ be a pair of nonempty open subsets of ${\cal M}_{n \times n} (\mathbb{K})$ and let $M \in {\cal U}$ be an invertible matrix. It follows that the set $\{FM: F \in \langle A, E \rangle \}$ is dense in ${\cal M}_{n\times n}(\mathbb{K})$, and so the orbit of $M$ under the action of $\langle A, E \rangle$ must intersect ${\cal V}$. 
\end{proof}


\begin{thebibliography}{20}


\bibitem{Apostol} T.M. Apostol, \emph{Modular functions and Dirichlet series in number theory}, Springer, 2nd ed. 1990.

\bibitem{Ayadi}A. Ayadi, \emph{Hypercyclic abelian semigroup of matrices on $\mathbb{C}^n$ and $\mathbb{R}^n$ and $k$-transitivity ($k\geq 2$)}, Appl. Gen. Topol.,  vol. 12, no. 1 (2011) 35--39.


\bibitem{jdt}O. Bogopolski, \emph{Introduction to Group Theory} (EMS Textbooks in Mathematics), European Mathematical Society (2008).

\bibitem{costakis} G. Costakis, D. Hadjiloucas, and A. Manoussos, \emph{Dynamics of tuples of matrices}, Proc. Amer. Math. Soc. 137 (2009), 1025--1034.



\bibitem{F}N.S. Feldman, \emph{Hypercyclic tuples of operators and somewhere dense orbits}, J. Math. Anal.
Appl. 346 (2008), 82--98.



\bibitem{MJ1}M. Javaheri, \emph{Dense 2-generator subsemigroups of $2\times 2$ matrices}, J. Math. Anal. Appl. 387 (2012) 103--113.

\bibitem{MJ2}M. Javaheri, \emph{Semigroups of matrices with dense orbits,} Dyn. Syst. 26 (3) (2011), 235--243.



\bibitem{Rol}S. Rolewicz, \emph{On orbits of elements}, Studia Math. 32 (1969), 17--22.

\end{thebibliography}
\end{document}